\documentclass[a4paper,11pt,leqno]{amsart}
\usepackage{a4wide}
\usepackage{amsmath}
\usepackage[italian, english]{babel}
\usepackage{amsfonts}
\usepackage{amssymb}
\usepackage{dsfont}
\usepackage{mathrsfs}
\usepackage{graphicx}
\usepackage{multicol}
\usepackage{fancyhdr}
\usepackage{amsthm}
\usepackage{empheq}
\usepackage{cases}
\usepackage[all]{xy}
\usepackage{stmaryrd}
\usepackage[colorlinks, citecolor=blue, linkcolor=red]{hyperref}
\usepackage{mathrsfs}
\usepackage{tikz,tikz-cd}
%
%



\def\SS{{{\mathbb S}}}

\tikzset{
	subset/.style={
		draw=none,
		edge node={node [sloped, allow upside down, auto=false]{$\subset$}}},
	Subset/.style={
		draw=none,
		every to/.append style={
			edge node={node [sloped, allow upside down, auto=false]{$\subset$}}}
	}
}
\tikzset{
	labl/.style={anchor=south, rotate=90, inner sep=.50mm}
}

\newcommand{\erre}{\mathds{R}}

\newcommand{\ricc}{\operatorname{Ric}}
\newcommand{\weyl}{\operatorname{W}}

\newcommand{\bigslant}[2]{{\raisebox{.0em}{$#1$}\left/\raisebox{-.0em}{$#2$}\right.}}
\newcommand{\KN}{\mathbin{\bigcirc\mspace{-15mu}\wedge\mspace{3mu}}}

\newcommand{\longra}{\longrightarrow}


\newcommand{\pa}[1]{{\left(#1\right)}}                  
\newcommand{\sq}[1]{{\left[#1\right]}}                  
\newcommand{\abs}[1]{{\left|#1\right|}}                 
\newcommand{\pair}[1]{\left\langle#1\right\rangle}      



\newcommand{\ol}[1]{\overline{#1}}

\renewcommand{\tilde}[1]{\widetilde{#1}}

\newcommand{\qd}[1]{\mathbin{{Q}_{#1}\mspace{-25mu}\tiny\raisebox{0.65ex}{$2$}\mspace{20mu}}}
\newcommand{\qt}[1]{\mathbin{{Q}_{#1}\mspace{-24.7mu}\tiny\raisebox{0.65ex}{$3$}\mspace{20mu}}}
\newcommand{\qq}[1]{\mathbin{{Q}_{#1}\mspace{-25.6mu}\tiny\raisebox{0.7ex}{$4$}\mspace{20mu}}}


\newcommand{\pmat}[1]{{\begin{pmatrix}#1\end{pmatrix}}} 


\newtheorem{theorem}{\textbf{Theorem}}[section]

\newtheorem{proposition}[theorem]{\textbf{Proposition}}

\theoremstyle{remark}
\newtheorem{rem}[theorem]{\textbf{Remark}}

\numberwithin{equation}{section}
%
%
%


\title[Riemannian four-manifolds and their twistor spaces]
{On Riemannian four-manifolds and their twistor spaces: a moving frame approach}

\linespread{1.2}

\keywords{Twistor space, four manifold, Einstein manifold, moving frames}

\subjclass[2010]{53C28, 53C25, 53B21}

\begin{document}
\maketitle


\begin{center}
	\textsc{\textmd{Giovanni Catino\footnote{Politecnico di Milano, Italy.
				Email: giovanni.catino@polimi.it.}, Davide Dameno (*) \footnote{(*) Corresponding author. Universit\`{a} degli Studi di Milano, Italy.
				Email: davide.dameno@unimi.it.}, Paolo
			Mastrolia\footnote{Universit\`{a} degli Studi di Milano, Italy.
				Email: paolo.mastrolia@unimi.it.}. }}
\end{center}

\begin{abstract} In this paper we study the twistor space $Z$ of an oriented Riemannian four-manifold $M$ using the moving frame approach, focusing, in particular, on the Einstein, non-self-dual setting. We prove that any general first-order linear condition on the almost complex structures of $Z$ forces the underlying manifold $M$ to be self-dual, also recovering most of the known related rigidity results. Thus, we are naturally lead to consider first-order quadratic conditions, showing that the Atiyah-Hitchin-Singer almost Hermitian twistor space of an Einstein four-manifold bears a resemblance, in a suitable sense, to a nearly K\"ahler manifold.

\end{abstract}

\tableofcontents

\

\section{Introduction and main results}\label{secIntro}
Let $(M,g)$ be a Riemannian manifold of dimension $2m$, with metric
$g$. The \emph{twistor space} $Z$ associated to $M$ is defined as the set of all the
pairs $(p,J_p)$ such that $p\in M$ and $J_p$ is a linear endomorphism
of the tangent space $T_pM$ which satisfies the following conditions:
\begin{enumerate}
	\item for every $X,Y\in T_pM$, $g_p(J_p(X),J_p(Y))=g_p(X,Y)$;
	\item for every $X\in T_pM$, $J_p(J_p(X))=-X$.
\end{enumerate}
Such an endomorphism is called a $g$-\emph{orthogonal complex structure} on $T_pM$
\footnote{In this paper, we call \emph{complex structure} an endomorphism $J_V$ of a
	vector space $V$ such that $J_V^2=-\operatorname{Id}_V$, while we call
	\emph{almost complex structure} a $(1,1)$-tensor field $J$ on a differentiable manifold
	$M$ such that $J$ smoothly assigns, to every point $p$, a complex structure $J_p$
	on $T_pM$.}.

The twistor space $Z$ defines a fiber bundle over $M$ {\em via} the map that assigns to every
pair $(p,J_p)$ the point $p\in M$; hence, we can define $Z$ in an equivalent 
way as
\[
Z=\bigslant{O(M)}{U(m)},
\]
where $O(M)$ denotes the orthonormal frame bundle over $M$ and
the unitary group $U(m)$ is identified with a subgroup of $SO(2m)$ 
(see, for instance, \cite{debnan} and \cite{jenrig}). Moreover, 
if $M$ is oriented, $Z$ has two connected components $Z_{\pm}$ defined 
as quotients of the bundles $O(M)_{\pm}$ (which are subbundles
of $O(M)$) of
positively and negatively oriented orthonormal frames \emph{via} the
action of $U(m)$.

Twistor spaces can be regarded as Riemannian manifolds: indeed, it is possible
to define a natural family of Riemannian metrics $g_t$ on them,
where $t$ is a positive parameter, by taking the pullback of
a specific bilinear form defined on $O(M)$, as explained in
\cite{debnan} and in \cite{jenrig}.

These structures, introduced
by Penrose (\cite{penrose}), 
have been the subject of many investigations
by the mathematical community, 
also in virtue of the numerous geometrical and algebraic tools involved in the definition of their properties. 
In 1978, Atiyah, Hitchin and
Singer (\cite{athisin}) exploited Penrose's twistor theory in the
Riemannian setting,
introducing the concept of twistor space associated to a Riemannian 
four-manifold in their study of self-dual Yang-Mills equations. 
Indeed, what can be observed is that there exist strong relations 
between the geometry of twistor spaces and the one of the underlying
Riemannian manifolds: many characterizations of
certain classes of Riemannian four-manifolds can be obtained by examining
the geometrical properties of their twistor spaces.

The particular interest for the four-dimensional geometry, beside the intrinsic importance due to the obvious relation with Relativity,  arises from the
unique structure of the Riemann curvature operator, which cannot
be realized in any other dimension.
Indeed, if $(M,g)$ is a Riemannian manifold of dimension
$m$, the Riemann curvature tensor $\operatorname{Riem}$ on $M$
admits the well known decomposition (see e.g. \cite{besse}, \cite{cmBook}
and Section \ref{secRiem} of this paper)
\[
\operatorname{Riem}=\weyl+\frac{1}{m-2}\operatorname{Ric}\KN g
-\frac{S}{2(m-1)(m-2)}g\KN g.
\]
Due to its symmetries, 
the Riemann curvature tensor defines a linear operator 
$\mathcal{R}$ from the bundle of two-forms $\Lambda^2$ to itself.
If $m=4$ and $M$ is oriented, $\Lambda^2$ splits, {\itshape via}
the Hodge $\star$ operator, into the direct sum of
two subbundles $\Lambda_+$ and $\Lambda_-$, which
are called the bundles of \emph{self-dual} and \emph{anti-self-dual} forms,
respectively. This implies that the Riemann curvature operator gives rise
to three linear maps $A$, $B$ and $C$, such that $A$ (resp, $C$) is a
symmetric endomorphism of $\Lambda_+$ (resp., $\Lambda_-$) and $B$ is
a linear map from $\Lambda_+$ to $\Lambda_-$ (see \cite{athisin}, \cite{besse}, \cite{singthor}
and \cite{friedr} for a complete dissertation. We would like to thank Prof.
Ilka Agricola for having pointed out to us this last reference);
therefore, $\mathcal{R}$ is represented by a block matrix
\[
\mathcal{R}=
\left(
\begin{array}{cc}
A & B^T\\
B & C
\end{array}
\right),
\]
where $A^T=A$, $C^T=C$. Moreover, $\operatorname{tr}A=\operatorname{tr}C=\frac{S}{4}$.
The splitting of $\Lambda^2$ also induces a decomposition of the Weyl tensor 
into a sum
\[
\weyl=\weyl^++\weyl^-,
\]
where $\weyl^+$ (resp., $\weyl^-$) is called the {\em self-dual} (resp., {\em anti-self-dual}) {\em part}
of $\weyl$. If $\weyl^+=0$ (resp., $\weyl^-=0$), we say that $M$ is an {\em anti-self-dual}
(resp., {\em self-dual}) {\em manifold}. 
A characterization of these \emph{half conformally flat} metrics in
terms of the decomposition of $\mathcal{R}$ is given by
Theorem \ref{cond4mani}.

In the literature, many results about the relation between a Riemannian four-manifold
$M$ and its twistor space $Z$ were achieved starting from the hypothesis
that $M$ is half conformally flat: for instance, Atiyah, Hitchin and Singer, in \cite{athisin},
introduced an almost complex structure $J_+$ on $Z_{\pm}$ and showed that $M$ is
a self-dual (resp., anti-self-dual) manifold if and only if $J_+$ is integrable on $Z_-$ (resp., on $Z_+$), i.e. the associated
Nijenhuis tensor $N_{J_+}$ vanishes identically, while Eells and Salamon, in
\cite{elsal}, defined an
almost complex structure $J_-$ on $Z_{\pm}$ which is never integrable.
In 1985, Friedrich and Grunewald (\cite{frigru}) characterized 
Einstein twistor spaces $(Z,g_t)$ as the ones whose base manifold 
is Einstein, half conformally flat and with positive scalar curvature.
Some important characterization theorems for Einstein self-dual manifolds
were proved by Jensen and Rigoli (\cite{jenrig}), Friedrich and Kurke 
(\cite{frikur}) and by Davidov and Mu\v{s}karov
(see, for instance, 
\cite{mus}, \cite{davmusherm}, \cite{davmus1}, \cite{davmus2} 
and \cite{davmuskahl}), starting
from the classification of the almost Hermitian manifolds due to
Gray and Hervella (\cite{grayher}). In 1985, O' Brian and Rawnsley generalized the Atiyah-Hitchin-Singer twistor theory to higher dimensions, studying the problem of integrability of certain complex structures and proving a necessary and sufficient condition of integrability which involves the vanishing of the Bochner tensor for the underlying manifold (see \cite{obrian}). In the recent paper \cite{fuzhou}, the authors exploit the moving frame formalism to study, among other things, the so-called balanced and first Gauduchon metric conditions on the twistor spaces of a Riemannian four-manifold.

In this paper we start from the following questions:
\begin{enumerate}
    \item Is it possible to introduce a framework that could simplify the study of the Riemannian and Hermitian features of the twistor space
		associated to a Riemannian four-dimensional manifold?
	\item Given an \emph{Einstein four-manifold} $M$, is it possible to find new and interesting properties of its associated twistor space?

\end{enumerate}

Our approach to the aforementioned questions is inspired by the works of Jensen and Rigoli (\cite{jenrig}) and of Fu and Zhou (\cite{fuzhou}):
   all our computations of the main Riemannian and Hermitian features of the twistor spaces are based on the method of moving frames {\em \`a la} Cartan, which provides an effective answer to question $(1)$. As a consequence of our analysis we are able to easily recover and generalize some classical results. In particular, our first main result is the following
\begin{theorem}\label{TH:GeneralLinear}
	Let $(M,g)$ be an oriented Riemannian four-manifold and let
	$(Z_-,g_t,\bar{J})$ be
	its twistor space, with $\bar{J}=J_+$ or $\bar{J}=J_-$. Suppose that, for
	every $X,Y$ smooth vector fields on $Z_-$,
	\begin{align*}
	&a_1(\nabla_X \bar{J})Y+a_2(\nabla_Y\bar{J})X+a_3(\nabla_{\bar{J}X}\bar{J})Y
	+a_4(\nabla_{\bar{J}Y} \bar{J})X+
	a_5(\nabla_{\bar{J}X}\bar{J})\bar{J}Y+\\
	&+a_6(\nabla_{\bar{J}Y}\bar{J})\bar{J}X+a_7(\nabla_{X}\bar{J})\bar{J}Y+
	a_8(\nabla_{Y} \bar{J})\bar{J}X=0 \notag
	\end{align*}
	for some $a_i\in\mathbb{R}$, $i=1,\ldots,8$, such that $a_j\neq 0$ for some $j$. Then,
	$M$ is self-dual.
\end{theorem}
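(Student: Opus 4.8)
The plan is to turn the hypothesis into a family of algebraic identities on the anti-self-dual curvature block $C=W^-+\tfrac{S}{12}I_3$ of $M$, and to show that for \emph{every} nonzero $(a_1,\dots,a_8)$ these identities force the trace-free part $W^-$ to vanish. I work in the adapted orthonormal frame on $Z_-$ furnished by the moving-frame formalism: a horizontal block $\{e_1,e_2,e_3,e_4\}$ realizing $Z_-\to M$ as a Riemannian submersion (for the metric $g_t$) and a vertical block $\{e_5,e_6\}$ tangent to the fibre $S^2\subset\Lambda_-$. On the horizontal block both $J$ and $\JJ$ act as the tautological complex structure $J_p$ attached to the point of $Z_-$, while on the vertical block they act as $+$ resp. $-$ the round complex structure of the fibre; this sign is the only difference between the two cases, and I would carry both in parallel.

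Using the structure equations I would first tabulate $(\nabla_{e_a}\bar{J})e_b$. The decisive observation is that, for \emph{horizontal} $X,Y$, the tensor $T(X,Y):=(\nabla_X\bar{J})Y$ is vertical and is governed by the O'Neill integrability tensor $\mathcal{A}$ of the submersion through
\begin{align*}
T(X,Y)=\mathcal{A}_X(\bar{J} Y)-\bar{J}\,\mathcal{A}_X Y,\qquad \mathcal{A}_XY=\tfrac12[X,Y]^{V},
\end{align*}
and that $[X,Y]^{V}$ is exactly the curvature of the induced connection on the parallel subbundle $\Lambda_-$, evaluated at the fibre point $J_p$. Hence $T$ is a universal vertical-vector-valued bilinear expression in $X,Y$ built from the block $C$ alone: no trace-free Ricci term $B$ can enter, precisely because $\Lambda_\pm$ are parallel and the relevant curvature is the diagonal block $C\colon\Lambda_-\to\Lambda_-$. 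This is the step that confines any first-order linear condition on $\bar{J}$ to the self-duality tensor plus scalar curvature.

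Next I would linearize. Differentiating $\bar{J}^2=-\operatorname{Id}$ yields $(\nabla_X\bar{J})\bar{J}=-\bar{J}(\nabla_X\bar{J})$, which rewrites terms $5$–$8$ (those with an inner $\bar{J}$) as $-\bar{J}$ applied to the corresponding terms with the inner $\bar{J}$ removed. Restricted to horizontal $X,Y$, the hypothesis then reads
\begin{align*}
&a_1 T(X,Y)+a_2 T(Y,X)+a_3 T(\bar{J} X,Y)+a_4 T(\bar{J} Y,X)\\
&\qquad-a_5\,\bar{J} T(\bar{J} X,Y)-a_6\,\bar{J} T(\bar{J} Y,X)-a_7\,\bar{J} T(X,Y)-a_8\,\bar{J} T(Y,X)=0,
\end{align*}
an identity among vertical vectors that must hold at every point of $Z_-$, hence for every $X,Y\in T_pM$ and, along each fibre, for every $J_p$. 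Substituting the formula for $T$ and identifying the vertical plane with $\mathbb{C}$ so that $\bar{J}$ is $\pm i$, the left-hand side becomes an explicit linear function of the entries of $C$. Since $C=W^-+\tfrac{S}{12}I_3$ and the scalar block $\tfrac{S}{12}I_3$ acts as a homothety of $\Lambda_-$ while $W^-$ does not, the two contributions have genuinely different dependence on the fibre point $J_p$; moving along the fibre therefore separates them, and I would extract the $W^-$-part as an independent linear system.

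The crux — and the step I expect to be the main obstacle — is the purely linear-algebraic claim that this $W^-$-system has only the trivial solution $W^-=0$ for \emph{every} nonzero coefficient vector. A priori a special relation among the $a_i$ could make the coefficient of $W^-$ degenerate in the most accessible (horizontal) components, so one must confirm, from the full component table of $T$ together with its $\bar{J}$-twisted relatives, that the curvature dependence can never be entirely cancelled. The key mechanism is that $\bar{J}$ acts as a $\tfrac{\pi}{2}$-rotation of the vertical plane and so cannot convert the curvature contribution of one index pattern into the exact negative of another; thus no nonzero $(a_1,\dots,a_8)$ is a common annihilator of the eight horizontal evaluation maps $W^-\mapsto(\text{term})$. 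Once $W^-=0$ is forced we obtain $C=\tfrac{S}{12}I_3$, i.e. $M$ is self-dual, uniformly in the two cases $\bar{J}=J$ and $\bar{J}=\JJ$.
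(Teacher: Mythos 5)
Your proposal rests on a structural claim that is false, and false in a way that matters for precisely this theorem. You assert that for horizontal $X,Y$ the tensor $T(X,Y)=(\nabla_X\bar J)Y$ is ``built from the block $C$ alone: no trace-free Ricci term $B$ can enter, because $\Lambda_\pm$ are parallel and the relevant curvature is the diagonal block $C$.'' Parallelism of $\Lambda_-$ only guarantees that the curvature of the induced connection takes values in $\mathfrak{so}(\Lambda_-)$; its dependence on the plane $X\wedge Y$ still goes through all of $\Lambda^2=\Lambda_+\oplus\Lambda_-$. The vertical part of $[X,Y]$ is the fiber projection of $\mathcal{R}(X\wedge Y)$, whose relevant chirality component is $B\big((X\wedge Y)_+\big)+C\big((X\wedge Y)_-\big)$, i.e.\ the block row $(B\;\,C)$, not the diagonal block. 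Since a generic coordinate plane, e.g.\ $e_1\wedge e_2=\tfrac{1}{\sqrt2}(\alpha_+^1+\alpha_-^1)$, has both chirality components, $B$ does enter. You can verify this against the paper's own tables (in the paper's conventions negatively oriented frames are used, so $A$ is the self-duality block): from \eqref{nablajplus}, $J^5_{1,2}=-J^1_{5,2}=\tfrac{t}{2}(R_{1412}+R_{2312})=\tfrac{t}{2}(A_{13}+B_{13})$. The combinations that happen to be $B$-free are special cancellations, e.g.\ the Nijenhuis components $N^5_{13}=2t(A_{33}-A_{22})$ and $N^5_{14}=-4tA_{23}$ in \eqref{nijcompplus}; for a general coefficient vector $(a_1,\dots,a_8)$ no such cancellation occurs. (Indeed this is the content of Theorem \ref{muskar1}: linear conditions such as the K\"ahler or nearly-K\"ahler one force $B=0$, i.e.\ Einstein, in addition to self-dual --- impossible if $B$ never entered.) Consequently the ``explicit linear system in $W^-$ alone'' you plan to extract does not exist: your identities constrain $A$ and $B$ jointly, and the separation-along-the-fibre argument as described does not dispose of the $B$-terms.

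The second gap is the one you flag yourself: the claim that no nonzero $(a_1,\dots,a_8)$ annihilates the resulting system is essentially the whole theorem, and a rotation heuristic does not prove it. The paper's proof (Theorem \ref{linselfdual}) shows what is actually required: in the generic case one of $a_8-a_4$, $a_2+a_6$, $a_7-a_3$, $a_1+a_5$ is nonzero, and suitable evaluations yield a $2\times 2$ system whose determinant is a sum of squares, forcing $A_{12}=A_{13}=0$ identically on $O(M)_-$; Proposition \ref{suffcond} (a frame-rotation argument you also omit) then upgrades this to $A$ scalar, i.e.\ self-duality. But on the degenerate locus $a_1=-a_5$, $a_2=-a_6$, $a_3=a_7$, $a_4=a_8$ (and further $a_1=-a_2$, $a_3=-a_4$) those four coefficient combinations all vanish, and one must use different evaluations --- notably ones with a \emph{vertical} argument, such as $(p,q,t)=(5,3,1)$ --- which your restriction to horizontal $X,Y$ discards from the outset. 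For $\bar J=\JJ$ the degenerate case even forces a contradiction, namely all $a_i=0$ (Theorem \ref{linselfdual2}), a phenomenon invisible in your uniform sign-carrying treatment. Both the curvature-block bookkeeping and the coefficient case analysis would have to be redone before your outline can close.
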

This theorem allows us to prove in an alternative way the integrability 
result on the Atiyah-Hitchin-Singer almost complex structure in 
\cite{athisin} and
the characterization results for Einstein, self-dual manifolds with positive scalar
curvature in \cite{mus}. Concerning question $(2)$, since one of our main 
goals is to study Einstein four-manifolds whose metrics are not necessarily 
self-dual, the previous theorem naturally lead  us to consider first-order 
quadratic conditions: more precisely, we are able to show a local (i.e., holding only
for \emph{orthonormal} frames/coframes), quadratic characterization of Einstein four-manifolds:

\begin{theorem}\label{TH:LocalQuadratic}
An oriented Riemannian four-manifold $(M,g)$ is Einstein if and only if, for every orthonormal frame in
$O(M)_-$ (equivalently, for every negatively oriented orthonormal coframe),
\begin{equation*}
\sum_{t=1}^6(J_{p,q}^t+J_{q,p}^t)(J_{p,p}^t-J_{q,q}^t)=0, \quad\forall p,q=1,\ldots,6,
\end{equation*}
where $J_{p,q}^t$ are the components of the covariant derivative of $J_+$ with respect to a local orthonormal coframe on $(Z_-,g_t,J_+)$.
\end{theorem}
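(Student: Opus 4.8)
The plan is to reduce the stated identity to a condition on the symmetric part of $\nabla J$ alone and then to read it off against the block decomposition of the curvature operator of $M$, in which the Einstein condition becomes the vanishing of the Ricci block $B$. The starting point is an elementary observation: the combination in the statement depends only on the symmetrised covariant derivative of $J$. Setting $G_{p,q}^t:=J_{p,q}^t+J_{q,p}^t$, one has $G_{p,p}^t=2J_{p,p}^t$, hence
\[
\sum_{t=1}^6 (J_{p,q}^t+J_{q,p}^t)(J_{p,p}^t-J_{q,q}^t)=\frac12\sum_{t=1}^6 G_{p,q}^t\,(G_{p,p}^t-G_{q,q}^t).
\]
Thus the antisymmetric part of $\nabla J$ drops out, and the whole question is governed by the tensor $G$ measuring the failure of $(Z_-,g_t,J)$ to be nearly K\"ahler---exactly the heuristic advertised in the abstract.

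Next I would insert the explicit moving-frame expressions for the components $J_{p,q}^t$ obtained in the computations underlying Theorem \ref{TH:GeneralLinear}. In a coframe adapted to the twistor fibration (indices $1,\dots,4$ horizontal, $5,6$ vertical), these are linear in the entries of the curvature blocks $A$, $B$, $C$. The decisive dictionary, standard for oriented four-manifolds (see \cite{besse}), is that the off-diagonal block $B\colon\Lambda_+\to\Lambda_-$ represents the traceless Ricci tensor, so that $M$ is Einstein if and only if $B=0$, while $A$ and $C$ carry the self-dual and anti-self-dual Weyl curvatures together with the scalar curvature.

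The core computation is then to substitute these formulas into $\sum_t G_{p,q}^t(G_{p,p}^t-G_{q,q}^t)$ and to track the dependence on $B$. A priori the result is quadratic in $(A,B,C)$, with a $B$-free part (types $A^2,AC,C^2$) and a $B$-dependent part (types $AB,BC,B^2$). The heart of the forward implication is to verify that the $B$-free part vanishes identically, irrespective of the Weyl and scalar curvatures; this is forced by the antisymmetry of the factor $G_{p,p}^t-G_{q,q}^t$ set against the symmetric factor $G_{p,q}^t$, and it is consistent with the classical fact that for self-dual Einstein $M$ with $S>0$ the space $(Z_-,g_t,J)$ is genuinely nearly K\"ahler, so that $G\equiv0$ and the identity is trivially true. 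Granting this, if $M$ is Einstein then $B=0$, the $B$-dependent terms disappear, and the identity holds for every frame.

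The backward implication is where I expect the real difficulty. One knows only that the quadratic expression vanishes for every $p,q$ and, crucially, for every negatively oriented orthonormal coframe, and must deduce $B=0$. Since the surviving part is quadratic, vanishing at a single frame does not by itself exclude a nonzero $B$ conspiring with $A$ and $C$; the way out is to exploit the full $SO(4)$-equivariance. Imposing the identity over the whole of $O(M)_-$ is a system of algebraic relations on the $SO(4)$-orbit of $(A,B,C)$, and by choosing frames that put $B$ into a normal form under the $SO(3)\times SO(3)$ action on $\Lambda_+\oplus\Lambda_-$ one can isolate the individual entries of $B$ and force each to vanish. Verifying that this overdetermined system admits only the trivial solution $B=0$, with no cancellation against $A$ and $C$ surviving across all frames, is the technical crux of the argument.
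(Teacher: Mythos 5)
Your overall strategy is the same as the paper's---express the components of $\nabla J$ in a coframe adapted to the twistor fibration, observe that the Einstein condition is $B=0$ for the curvature block $B\colon\Lambda_+\to\Lambda_-$, verify the identity by direct substitution, and prove the converse by exploiting the transformation laws \eqref{abctrans} under changes of frame---but both substantive steps are left unexecuted, and the one justification you do supply is invalid. For the forward direction you assert that the $B$-free part of the quadratic expression vanishes because the factor $G_{p,q}^t$ is symmetric in $(p,q)$ while $G_{p,p}^t-G_{q,q}^t$ is antisymmetric; this is a non sequitur: the product of a symmetric and an antisymmetric function of $(p,q)$ is merely antisymmetric, which says nothing about its vanishing for a \emph{fixed} pair $(p,q)$. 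What is actually true is stronger and can only be seen from the explicit components \eqref{nablajplus}: for the relevant pairs, \emph{each factor separately} is a linear combination of entries of $B$ alone---the $A$-, $C$- and $1/t^2$-contributions cancel in the symmetrization $J_{p,q}^t+J_{q,p}^t$ and in the diagonal difference $J_{p,p}^t-J_{q,q}^t$---so the whole expression is a quadratic form in $B$, with no $A^2$, $AC$, $C^2$, $AB$ or $BC$ terms at all; for instance $\sum_{t=1}^6(J_{1,3}^t+J_{3,1}^t)(J_{1,1}^t-J_{3,3}^t)=B_{32}B_{12}+B_{13}B_{33}$. Without this computation neither implication gets off the ground.

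For the converse, which you rightly call the crux, your proposed route runs into a concrete obstruction that you do not address: the identity for $(p,q)=(1,3)$ and $(1,4)$ yields the two relations $B_{12}B_{32}+B_{13}B_{33}=0$ and $B_{12}B_{22}+B_{13}B_{23}=0$, and if you rotate $B$ into its diagonal (singular-value) normal form under $SO(3)\times SO(3)$, these relations are \emph{vacuously} satisfied at that frame, since every summand contains an off-diagonal entry of $B$; the normal form by itself therefore isolates nothing. The information must instead be extracted by evaluating the relations at further rotated frames via \eqref{abctrans}, which is exactly what the paper does: either some entry of $B$ vanishes at a given frame, and then the relations applied at suitably rotated frames propagate zeros until $B=0$; or all six entries $B_{12},B_{13},B_{22},B_{23},B_{32},B_{33}$ are nonzero, in which case the system gives $B_{23}B_{32}=B_{22}B_{33}$ and a further frame change forces $B_{22}^2=-B_{23}^2$, a contradiction. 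This case analysis (or an equivalent substitute for it) is the actual mathematical content of the theorem, and it is absent from your proposal.
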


Moreover, we can prove a quadratic necessary and sufficient condition for Einstein, non-self-dual manifolds:
\begin{theorem}\label{TH:LocalQuadratic2}
	Let $(M,g)$ be an oriented Riemannian Einstein four-manifold. Then, for every orthonormal frame in
$O(M)_-$,
 \begin{equation*}
   \pa{J^t_{q, p}+J^t_{p, q}}N_{pq}^t=0 \qquad \text{(no sum over $p$,$q$,$t$), }
 \end{equation*}
	where $J_{p,q}^t$ and $N_{pq}^t$ are the components of the covariant derivative of $J_+$ and of the Nijenhuis tensor, respectively, with
	respect to a local orthonormal coframe on $(Z_-,g_t,J_+)$. Conversely, if 
	the equation \eqref{nijeinst} holds on $O(M)_-$, then, for every point
	$p\in M$ such that $\abs{\weyl^-}\neq 0$ at $p$, the traceless Ricci
	tensor vanishes at $p$, i.e. $B=0$. In particular, if 
	$\abs{\weyl^-}\neq 0$ on $M$, $(M,g)$ is an Einstein manifold.
\end{theorem}

\begin{rem}
It is a well-known fact (see Theorem \ref{muskar1}) that the twistor space $(Z_-, g_t, J)$ of an Einstein, self-dual manifold with positive scalar curvature $(M, g)$ is nearly-K\"ahler (indeed, K\"ahler); 
moreover, by Hitchin's classification result of K\"{a}hler twistor spaces
(see \cite{hit}) and 
of Einstein, self-dual manifolds with positive scalar curvature due to
Friedrich and Kurke (see \cite{frikur}), this is the case if and only if $(M, g)$  is isometric (up to quotients) to $\mathds{S}^4$ or $\mathds{CP}_2$.   
If $(M, g)$ is Einstein, but not necessarily self-dual, the properties of 
its  twistor space are not so very well understood: for some interesting 
results in this direction, see e.g. \cite{reznik} and \cite{finepanov}. 
Theorems \ref{TH:LocalQuadratic}, \ref{TH:LocalQuadratic2} show that the 
Atiyah-Hitchin-Singer almost Hermitian 
twistor space of an Einstein four-manifold bears a resemblance to a nearly 
K\"ahler manifold. Indeed, it is known that there exist relations between 
twistor and nearly K\"ahler geometries: for instance, in 1985, Belgun and 
Moroianu presented a homothetic classification of six-dimensional strict 
nearly K\"ahler manifolds whose canonical Hermitian connection has reduced 
holonomy by exploiting their twistorial structures (see \cite{belmoro}). Note 
that, in this work, we do not focus our attention on almost Hermitian 
manifolds (in particular, twistor space associated to a Riemannian manifold) 
satisfying these  ``weak''-nearly K\"ahler conditions. A natural question 
would be the following: is it possible to characterize the round metric on 
$\SS^4$ as the unique Einstein metric, by showing that the twistor space 
$(Z_-,g_t,J)$ of a four-sphere $\SS^4$ equipped with an Einstein metric 
$g_{Ein}$ cannot satisfy the conditions in Theorems \ref{TH:LocalQuadratic} 
and \ref{TH:LocalQuadratic2}, unless it is K\"ahler (or nearly K\"ahler)?

This will be the subject of future investigations, together with the analysis of higher order conditions on the almost complex structures and of curvature properties of $Z$.
\end{rem}

The paper is organized as follows: for the sake of completeness, and to fix the notation and conventions of the moving frame formalism, in Section \ref{secRiem} we recall some very well-known facts about the geometry of Riemannian $4$-manifolds. The short Section \ref{secTwist} is devoted to the formal definition of the twistor space $Z$ of a Riemannian manifold, with special attention to the case of dimension four. In Section \ref{secLin} we show that, given a Riemannian four-manifold $M$ and its twistor space $Z$, any linear condition on the covariant derivative of the almost complex structures on $Z$ implies that $M$ is self-dual; we also show how to recover some of the classical result (due to Atiyah, Hitchin and Singer and to  Mu\v{s}karov). In Section \ref{secQuadr} we focus on quadratic conditions, in order to study the case of Einstein four-manifolds whose metric is not necessarily self-dual. 

\

\section{Geometry of Riemannian four-manifolds}\label{secRiem}
In this section, for the sake of completeness,  we recall some useful and 
well known features of Riemannian four-manifolds (see e.g. \cite{besse}, 
\cite{friedr}, \cite{jenrig} 
\cite{salamon} and \cite{singthor}). Throughout the paper,
we adopt Einstein summation convention over repeated indices, unless it is 
specified otherwise.

\

\noindent{\bf The Hodge $\star$ operator in four dimensions.} Let $(M,g)$ a Riemannian oriented manifold of dimension $n$ and
let $\Lambda^k$ be the space (bundle) of the $k$-differential forms,
$1\leq k\leq n$. Given any local chart $(U,\varphi)$ that contains
$p\in M$, let $\{e_1,\ldots,e_n\}$ be a local, positively oriented,
orthonormal frame for $g$ on $U$ and let $\{\theta^1,\ldots,\theta^n\}$
be its dual orthonormal coframe, with $\theta^i\in\Lambda^1$, $\forall i=1,\ldots,n$.
Since $M$ is oriented, we can define a \emph{volume form} locally expressed by
\[
\omega=\theta^1\wedge\ldots\wedge\theta^n\in\Lambda^n.
\]
Now it is possible to define the \emph{Hodge $\star$ operator},
$\forall\, 1\leq k\leq n$, locally as
\begin{align} \label{hodge}\nonumber
\star:\Lambda^k&\longrightarrow\Lambda^{n-k}\\
\theta^{i_1}\wedge\ldots\wedge\theta^{i_k}&\longmapsto\star(\theta^{i_1}\wedge\ldots\wedge\theta^{i_k})=:\eta
\end{align}
where $\eta=\theta^{j_1}\wedge\ldots\wedge\theta^{j_{n-k}}\in\Lambda^{n-k}$ is the unique
$(n-k)$-form such that $(\theta^{i_1}\wedge\ldots\wedge\theta^{i_k})\wedge\eta=\omega$.
By construction, $\star$ satisfies the equation
\begin{equation} \label{hodinv}
\star^2=(-1)^{k(n-k)}I,
\end{equation}
where $I$ is the identity map from $\Lambda^k$ to itself.
Now, let $n=4$. We have that, if $k=2$, the $\star$ operator
is an involution: indeed, by definition and \eqref{hodinv},
\[
\star:\Lambda^2\longrightarrow\Lambda^2\quad \mbox{and} \quad \star^2=(-1)^{2\cdot2}I=I.
\]
If $\{\theta^1,\theta^2,\theta^3,\theta^4\}$ is
an orthonormal coframe for $M$ in a given chart, the set
$\{\theta^i\wedge\theta^j\}_{1\leq i<j\leq 4}$
is an orthonormal basis for $\Lambda^2$ with respect to the
inner product of differential forms induced on $\Lambda^2$
by the metric $g$.
Moreover, since $\star$ is an involution, its only two eigenvalues are $\pm1$
and it can be easily seen that
\begin{align}
\star(\theta^1\wedge\theta^2\pm\theta^3\wedge\theta^4)&=
\theta^3\wedge\theta^4\pm\theta^1\wedge\theta^2\notag ,\\
\star(\theta^1\wedge\theta^3\pm\theta^4\wedge\theta^2)&=
\theta^4\wedge\theta^2\pm\theta^1\wedge\theta^3, \label{eigens} \\
\star(\theta^1\wedge\theta^4\pm\theta^2\wedge\theta^3)&=
\theta^2\wedge\theta^3\pm\theta^1\wedge\theta^4\notag .
\end{align}
This means that
\begin{equation} \label{lamplusminus}
\Lambda_{\pm}:=span\big\{\theta^1\wedge\theta^2\pm\theta^3\wedge\theta^4,
\theta^1\wedge\theta^3\pm\theta^4\wedge\theta^2,
\theta^1\wedge\theta^4\pm\theta^2\wedge\theta^3 \big \}
\end{equation}
are the eigenspaces of $\star$ relative to the eigenvalue $\pm1$, respectively. Thus, by \eqref{lamplusminus}, we have that $\Lambda^2$ decomposes
in a direct sum of two three-dimensional subspaces (subbundles)
\begin{equation} \label{split}
\Lambda^2=\Lambda_+\oplus\Lambda_-.
\end{equation}
Note that, if $\{\theta^i\}_{i=1,\ldots,4}$ is a negatively oriented
orthonormal coframe, the signs $+$ and $-$ must be exchanged
in the right-hand side of \eqref{lamplusminus}. Moreover it is sufficient to define
\begin{align}
\alpha_{\pm}^1:=\frac{1}{\sqrt 2}(\theta^1\wedge\theta^2\pm\theta^3\wedge\theta^4) \notag \\
\alpha_{\pm}^2:=\frac{1}{\sqrt 2}(\theta^1\wedge\theta^3\pm\theta^4\wedge\theta^2) \label{ortheig}\\
\alpha_{\pm}^3:=\frac{1}{\sqrt 2}(\theta^1\wedge\theta^4\pm\theta^2\wedge\theta^3) \notag
\end{align}
to have orthonormal bases for $\Lambda_+$ and $\Lambda_-$, which are
called, respectively, the bundle of \emph{self-dual} and \emph{anti-self-dual}
$2$-forms of $M$. Clearly, any $2$-form $\eta$ can be written in a unique way as
\begin{equation} \label{sumforms}
\eta=\underbrace{\frac{1}{2}(\eta+\star\eta)}_{\in\Lambda_+}
+\underbrace{\frac{1}{2}(\eta-\star\eta)}_{\in\Lambda_-}=:\eta_+ +\eta_-,
\end{equation}
where $\eta_+$ is the \emph{self-dual} part of $\eta$ and
$\eta_-$ is the \emph{anti-self-dual} part.

There is an action of $SO(4)$ on $\Lambda^1$, defined as
\begin{align}
SO(4)\times\Lambda^1&\longrightarrow\Lambda^1 \label{action} \\
(a,\theta^i)&\longmapsto a(\theta^i):=(a^{-1})_j^i\theta^j, \notag
\end{align}
which induces an action of $SO(4)$ on $\Lambda^2$ given by
\begin{equation} \label{action2}
a(\theta^i\wedge\theta^j):=a(\theta^i)\wedge a(\theta^j)
\end{equation}
(see e.g. \cite{jenrig}). Moreover, it is known that $\mathfrak{so}(4)$ and $\Lambda^2$
are isomorphic \textit{via} the map
\begin{align}
f\colon\mathfrak{so}(4)&\longra\Lambda^2 \label{isomform} \\
X=(X_{ij})&\longmapsto\frac{1}{2}X_{ij}\theta^i\wedge\theta^j \notag
\end{align}
(here, $\mathfrak{so}(n)$ denotes the Lie algebra of $SO(n)$).
The isomorphism $f$ satisfies, for every $a\in SO(4)$, $X\in\mathfrak{so}(4)$,
\[
f(\operatorname{Ad}_a)(X)=a(f(X))
\]
where $\operatorname{Ad}$ means the adjoint representation of $SO(4)$, and,
since $\mathfrak{so}(4)\cong\mathfrak{so}(3)\oplus\mathfrak{so}(3)$,
$f$ induces isomorphisms $f_+,f_-$
\[
f_{\pm}:\mathfrak{so}(3)\longra\Lambda_{\pm}.
\]
The restriction of the adjoint action of $SO(4)$ to each copy
of $\mathfrak{so}(3)$ induces actions of $SO(3)$ on $\Lambda_+$
and $\Lambda_-$: namely, there exist smooth actions
\begin{align*}
SO(3)\times\Lambda_{\pm}&\longra\Lambda_{\pm}\\
(a,\eta_{\pm})&\longmapsto a(\eta_{\pm})
\end{align*}
such that, for every $a\in SO(3)$ and $Y\in\mathfrak{so}(3)$,
$a(f_{\pm}(Y))=f_{\pm}^{-1}(\operatorname{Ad}_a(Y))$.
Moreover, there exists a surjective Lie group homomorphism
\begin{equation} \label{mudef}
\mu:SO(4)\longra SO(3)\times SO(3)
\end{equation}
such that, for every $a\in SO(4)$, $\mu(a)=(a_+,a_-)$, where,
for every $\eta=\eta_++\eta_-\in\Lambda^2=\Lambda_+\oplus\Lambda_-$,
\[\
a(\eta)=a_+(\eta_+)+a_-(\eta_-).
\]

\

\noindent{\bf Decomposition of the Riemann curvature tensor.} Let $(M,g)$ be again a Riemannian, oriented, manifold of dimension $n$. We denote by $\mathrm{Riem}$ its Riemann curvature tensor and by $R_{ijkt}$ its
components with respect to an orthonormal coframe $\{\theta^1,\ldots,\theta^n\}$,
with $i,j,k,t=1,\ldots,n$. 
We also define the \emph{curvature forms} $\Omega_j^i$ associated
to the orthonormal coframe $\{\theta^i\}$ as the 2-forms satisfying
the second Cartan structure equations
\begin{equation} \label{secondstruc}
d\theta_j^i=-\theta_k^i\wedge\theta_j^k+\Omega_j^i,
\end{equation}
where $\theta_j^i$ are the Levi-Civita connection 1-forms which
satisfy the first Cartan structure equations
\begin{equation} \label{firststruc}
d\theta^i=-\theta_j^i\wedge\theta^j
\end{equation}
(see e.g. \cite{cmBook}). Since, for every $i,j=1,\ldots,n$, $\theta_j^i+\theta_i^j=0$, we
have that $\Omega_j^i+\Omega_i^j=0$; thus, the matrix of the curvature
forms $\Omega$ takes values in $\mathfrak{so}(n)$. Moreover, the curvature
forms satisfy
\begin{equation} \label{omegariem}
	\Omega_j^i=\frac{1}{2}R_{ijkt}\theta^k\wedge\theta^t,
\end{equation}
where $R_{ijkt}$ are exactly the Riemann curvature tensor components with respect to $\{\theta^i\}$.

\noindent Let $e,\tilde{e}$ be orthonormal frames
such that there exists a smooth change $A:U\cap\widetilde{U}\longrightarrow O(m)$
for which $\tilde{e}=eA$ (i.e. $\tilde{e}_i=A_i^je_j$). Recall that,
if $\widetilde{\Omega}$ is the matrix of the curvature forms associated
to the frame $\tilde{e}$ (equivalently, to the coframe $\tilde{\theta}$
dual to $\tilde{e}$), then the following transformation law holds
(see \cite{alimasrig})
\begin{equation} \label{transfcurv}
	\widetilde{\Omega}=A^{-1}\Omega A.
\end{equation}

Let us define the \emph{Kulkarni-Nomizu product} $\KN$: if $\eta$
and $\kappa$ are two $(0,2)$-symmetric tensors, we have that
$\eta\KN\kappa$ is the $(0,4)$-tensor with components
\[
(\eta\KN\kappa)_{ijkt}:=\eta_{ik}\kappa_{jt}-\eta_{it}\kappa_{jk}+\eta_{jt}\kappa_{ik}-\eta_{jk}\kappa_{it}.
\]
It is well known that, $\forall n\geq 3$, the Riemann curvature tensor admits the decomposition
\begin{equation} \label{riemdec}
	\operatorname{Riem}=\weyl+\frac{1}{n-2}\operatorname{Ric}\KN g-\frac{S}{2(n-1)(n-2)}g\KN g,
\end{equation}
where $\operatorname{Ric}=R_{ij}\theta^i\otimes\theta^j$ is the Ricci curvature
tensor, $S$ is the scalar curvature and $\weyl$ is the \emph{Weyl tensor}. Equation \eqref{riemdec} can be written in local form as
\begin{equation} \label{riemdeccomp}
	{R_{ijkt}=W_{ijkt}+\frac{1}{n-2}(R_{ik}\delta_{jt}-R_{it}
		\delta_{jk}+R_{jt}\delta_{ik}-R_{jk}\delta_{it})-\frac{S}{(n-1)(n-2)}
		(\delta_{ik}\delta_{jt}-\delta_{it}\delta_{jk})}
\end{equation}
Now, let $n=4$. It is possible to rewrite the equations \eqref{riemdec}
and \eqref{riemdeccomp} thanks to \eqref{split}. We know that
$\{\theta^i\wedge\theta^j\}_{1\leq i<j\leq 4}$ is an orthonormal
basis for $\Lambda^2$ and that $\{\alpha_{\pm}^1,\alpha_{\pm}^2,\alpha_{\pm}^3\}$,
defined in \eqref{ortheig}, is an orthonormal basis for $\Lambda_{\pm}$,
respectively. The Riemann curvature tensor
corresponds to a symmetric operator, called the \emph{Riemann curvature operator},
defined as
\begin{equation} \label{curvop}
\mathcal{R}:\Lambda^2\longrightarrow\Lambda^2\qquad \mathcal{R}(\gamma)=\frac{1}{4}R_{ijkt}\gamma_{kt}\theta^i\wedge\theta^j=\frac{1}{2}\gamma_{kt}\Omega_t^k,
\end{equation}
where $\gamma_{kt}=\gamma(e_k,e_t)$ ($\{e_i\}$ is the orthonormal frame dual to $\{\theta^i\}$). Since \eqref{split} holds, every 2-form $\gamma$ can be written as
in \eqref{sumforms} and, since $\mathcal{R}(\gamma)\in\Lambda^2$,
it also can be expressed in a unique sum
\[
\mathcal{R}(\gamma)=\mathcal{R}(\gamma)_+ + \mathcal{R}(\gamma)_-.
\]
Evaluating the Riemann curvature operator on the bases
\eqref{ortheig} in order to find the self-dual and the anti-self-dual
parts of the images, we obtain that there exist three $3\times 3$ matrices,
$A=(A_{ij})$, $B=(B_{ij})$, $C=(C_{ij})$, $i,j=1,2,3$, such that, again with respect to the basis
$\{\alpha_{\pm}^1,\alpha_{\pm}^2,\alpha_{\pm}^3\}$ of $\Lambda_\pm$, the Riemann curvature
operator representative matrix takes the form
\begin{equation}\label{matrdeco}
\mathcal{R}=
\left(
\begin{array}{cc}
A & B^T \\
B & C
\end{array}
\right)
\end{equation}
where $A=A^T$, $C=C^T$ and $\operatorname{tr}A=\operatorname{tr}C=S/4$ (here,
$\operatorname{tr}$ denotes the matrix trace). More explicitly, if
$\gamma=c_j^+\alpha_+^j+c_k^-\alpha_-^k$,
\[
\mathcal{R}(\gamma)=(c_j^+A_{kj}+c_t^-B_{tk})\alpha_-^k+
(c_j^+B_{kj}+c_t^-C_{kt})\alpha_-^k.
\]
The explicit expressions for the entries of $A$, $B$, and $C$ in terms
of the Riemann tensor can be found in \cite{jenrig}.

Thus, we can think of $A$ (respectively, $C$) as a symmetric linear
map from $\Lambda_+$ (respectively, $\Lambda_-$) to itself, that is
$A\in \operatorname{End}(\Lambda_+)$, $C\in \operatorname{End}(\Lambda_-)$, and we can think of $B$
as a linear map from $\Lambda_+$ to $\Lambda_-$, i.e. $B\in \operatorname{Hom}(\Lambda_+,\Lambda_-)$.

It is also explicitly possible to write the transformation laws for $A$, $B$ and $C$.
Recall that, if $e,\tilde{e}$ are two orthonormal frames defined on $U$ and
$\widetilde{U}$ and $a:U\cap\widetilde{U}\longrightarrow SO(4)$
is a smooth change of frame, the equation \eqref{transfcurv} holds for $\Omega$.
Since, for every $a\in SO(4)$, $\mu(a)=(a_+,a_-)$ defines the restriction
of the action of $a$ on $\Lambda_+$ and $\Lambda_-$,
we obtain the following transformation laws
\begin{equation} \label{abctrans}
	\widetilde{A}=a_+^{-1} Aa_+, \qquad \widetilde{B}=a_-^{-1} Ba_+, \qquad \widetilde{C}=a_-^{-1} Ca_-.
\end{equation}
Since it will simplify all our next computations, we introduce the (purely local) quantities

\begin{equation} \label{qdef}
  \qd{ab} := R_{12ab}+R_{34ab}; \qquad \qt{ab} := R_{13ab}+R_{42ab}; \qquad \qq{ab} := R_{14ab}+R_{23ab}.
\end{equation}
Note that, by the first Bianchi identity, we deduce
\begin{equation}
  \qt{12}+\qt{34} = \qd{13}+\qd{42}; \quad \qq{12}+\qq{34}= \qd{14}+\qd{23}; \quad \qt{14}+\qt{23} = \qq{13}+\qq{42}.
\end{equation}
Rewriting the components listed in \cite{jenrig}, 
we have the following expressions for $A$ and $B$:
\noindent
\begin{multicols}{2}
\noindent
\begin{align*}
  A_{11} &=
  \frac{1}{2}\pa{\qd{12}+\qd{34}}; \\ 
  A_{22} &=
  \frac{1}{2}\pa{\qt{13}+\qt{42}}; \\ 
  A_{33} &= 
  \frac{1}{2}\pa{\qq{14}+\qq{23}};
\end{align*}
\begin{align*}
  A_{12} &=A_{21}= 
  \frac{1}{2}\pa{\qt{12}+\qt{34}} = \frac{1}{2}\pa{\qd{13}+\qd{42}} \\ 
  A_{13} &=A_{31} = 
  \frac{1}{2}\pa{\qq{12}+\qq{34}} = \frac{1}{2}\pa{\qd{14}+\qd{23}}; \\ A_{23} &=A_{32} = 
  \frac{1}{2}\pa{\qt{14}+\qt{23}} = \frac{1}{2}\pa{\qq{13}+\qq{42}};
\end{align*}
\\
\end{multicols}
\noindent
\begin{multicols}{2}
\noindent
\begin{align*}
  B_{11} &= \frac{1}{2}\pa{\qd{12}-\qd{34}}; \\ B_{22} &= \frac{1}{2}\pa{\qt{13}-\qt{42}}; \\ B_{33} &= \frac{1}{2}\pa{\qq{14}-\qq{23}}; \\B_{12} &= \frac{1}{2}\pa{\qt{12}-\qt{34}}; \\ B_{21} &= \frac{1}{2}\pa{\qd{13}-\qd{42}}; 
\end{align*}
\begin{align*}  
  B_{13} &= \frac{1}{2}\pa{\qq{12}-\qq{34}}; \\ B_{31} &= \frac{1}{2}\pa{\qd{14}-\qd{23}}; \\ B_{23} &= \frac{1}{2}\pa{\qq{13}-\qq{42}}; \\B_{32} &= \frac{1}{2}\pa{\qt{14}-\qt{23}}.
\end{align*}
\end{multicols}
As far as the Weyl tensor is concerned, by \eqref{riemdeccomp} and \eqref{matrdeco}, we have
\begin{flalign*}
W_{1212}&=\frac{1}{2}\left(A_{11}-\frac{S}{12}\right)+
\frac{1}{2}\left(C_{11}-\frac{S}{12}\right)\\
W_{1213}&=\frac{1}{2}A_{12}+\frac{1}{2}C_{12}, & W_{1214}=\frac{1}{2}A_{13}+\frac{1}{2}C_{13},&\\
W_{1223}&=\frac{1}{2}A_{13}-\frac{1}{2}C_{13},
& W_{1242}=\frac{1}{2}A_{12}-\frac{1}{2}C_{12},\\
W_{1234}&=\frac{1}{2}\left(A_{11}-\frac{S}{12}\right)-
\frac{1}{2}\left(C_{11}-\frac{S}{12}\right),
& W_{1313}=\frac{1}{2}\left(A_{22}-\frac{S}{12}\right)+
\frac{1}{2}\left(C_{22}-\frac{S}{12}\right),
\end{flalign*}
\begin{flalign*}
W_{1314}&=\frac{1}{2}A_{23}+\frac{1}{2}C_{23},
& W_{1323}=\frac{1}{2}A_{23}-\frac{1}{2}C_{23},\\
W_{1342}&=\frac{1}{2}\left(A_{22}-\frac{S}{12}\right)-
\frac{1}{2}\left(C_{22}-\frac{S}{12}\right),
& W_{1334}=\frac{1}{2}A_{12}-\frac{1}{2}C_{12},\\
W_{1414}&=\frac{1}{2}\left(A_{33}-\frac{S}{12}\right)+
\frac{1}{2}\left(C_{33}-\frac{S}{12}\right),
& W_{1423}=\frac{1}{2}\left(A_{33}-\frac{S}{12}\right)-
\frac{1}{2}\left(C_{33}-\frac{S}{12}\right),
\end{flalign*}
\begin{flalign*}
W_{1442}&=\frac{1}{2}A_{23}-\frac{1}{2}C_{23},
& W_{1434}=\frac{1}{2}A_{13}-\frac{1}{2}C_{13},\\
W_{2323}&=\frac{1}{2}\left(A_{33}-\frac{S}{12}\right)+
\frac{1}{2}\left(C_{33}-\frac{S}{12}\right),
& W_{2342}=\frac{1}{2}A_{23}+\frac{1}{2}C_{23},\\
W_{2334}&=\frac{1}{2}A_{13}+\frac{1}{2}C_{13},
& W_{4242}=\frac{1}{2}\left(A_{22}-\frac{S}{12}\right)+
\frac{1}{2}\left(C_{22}-\frac{S}{12}\right),\\
W_{3442}&=\frac{1}{2}A_{12}+\frac{1}{2}C_{12},
& W_{3434}=\frac{1}{2}\left(A_{11}-\frac{S}{12}\right)+
\frac{1}{2}\left(C_{11}-\frac{S}{12}\right).
\end{flalign*}
It is apparent that all the components can be written as a sum of two addends
\[
W_{ijkt}=W_{ijkt}^+ + W_{ijkt}^-
\]
This means that the Weyl tensor $W$ splits into a sum of two $(0,4)$-tensors
\[
W=W^+ + W^-
\]
called, respectively, the \emph{self-dual} and the \emph{anti-self-dual}
components of $W$. A four-dimensional Riemannian manifold is called \emph{self-dual}
(respectively \emph{anti-self-dual}) if $W^-=0$ (resp., $W^+=0$). By a direct check
of the entries of $A$, $B$ and $C$ and the coefficients of $W^+$ and $W^-$,
we easily obtain the following
\begin{theorem} \label{cond4mani}
	Let $(M,g)$ be a Riemannian manifold of dimension $4$. Then,
	\begin{itemize}
		\item $M$ is self-dual if and only if $C-\frac{S}{12}I_3=0$ for
		every orthonormal positively oriented coframe (respectively, if and only if $A-\frac{S}{12}I_3=0$
		for every orthonormal negatively oriented coframe);
		\item $M$ is anti-self-dual if and only if $A-\frac{S}{12}I_3=0$ for every
		orthonormal positively oriented coframe (respectively, if and only if $C-\frac{S}{12}I_3=0$
		for every orthonormal negatively oriented coframe);
		\item $M$ is Einstein if and only if $B=0$ for every orthonormal positively or negatively oriented coframe.
	\end{itemize}
\end{theorem}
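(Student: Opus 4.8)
The plan is to read off everything directly from the explicit tables for the Weyl components $W_{ijkt}$ and for the entries of $B$ displayed above, so that all three equivalences reduce to elementary $3\times 3$ linear algebra. The crucial structural observation is that \emph{none} of the listed Weyl components involves $B$: each is displayed as a sum $W_{ijkt}=W_{ijkt}^+ + W_{ijkt}^-$ in which the first summand is built only from the quantities $A_{ij}-\frac{S}{12}\delta_{ij}$ and the second only from $C_{ij}-\frac{S}{12}\delta_{ij}$. This is exactly what one expects, since the off-diagonal block $B$ belongs to the trace-free Ricci term of the decomposition \eqref{riemdec}, not to the Weyl term.

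For the self-dual and anti-self-dual characterizations I would argue as follows. Comparing the table with \eqref{curvop} and \eqref{matrdeco}, the displayed splitting says precisely that the symmetric operator induced by $W^+$ is represented on $\Lambda_+$ by $A-\frac{S}{12}I_3$ and annihilates $\Lambda_-$, while the operator induced by $W^-$ is represented on $\Lambda_-$ by $C-\frac{S}{12}I_3$ and annihilates $\Lambda_+$. Since a $(0,4)$-tensor with the Riemann symmetries \eqref{riemsym} vanishes if and only if its induced operator vanishes, this yields at once $W^+=0\iff A=\frac{S}{12}I_3$ and $W^-=0\iff C=\frac{S}{12}I_3$ for any positively oriented coframe. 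The definitions of self-dual ($W^-=0$) and anti-self-dual ($W^+=0$) then give the first two bullet points. The ``respectively'' clauses for negatively oriented coframes follow from the remark after \eqref{lamplusminus}, since reversing orientation exchanges $\Lambda_+$ with $\Lambda_-$ and hence the roles of $A$ and $C$; consistency across positively oriented frames is automatic because the law \eqref{abctrans} conjugates $C$ by $a_-\in SO(3)$, which fixes the scalar matrix $\frac{S}{12}I_3$.

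For the Einstein characterization I would use the second block of identities, which express the nine entries of $B$ through the Ricci components. Imposing $B=0$ and reading the equations in the pairs $(B_{12},B_{21})$, $(B_{13},B_{31})$, $(B_{23},B_{32})$ forces all off-diagonal Ricci components $R_{12},R_{13},R_{14},R_{23},R_{24},R_{34}$ to vanish, while the three diagonal equations $B_{11}=B_{22}=B_{33}=0$ form an invertible linear system in the differences of the $R_{ii}$ and force $R_{11}=R_{22}=R_{33}=R_{44}$. Together with $\operatorname{tr}\operatorname{Ric}=S$ this is exactly $\operatorname{Ric}=\frac{S}{4}g$, i.e. $M$ is Einstein; the converse substitution is immediate. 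Independence of the (positively or negatively) oriented coframe is guaranteed by $\widetilde{B}=a_-^{-1}Ba_+$ in \eqref{abctrans}, which shows $B=0$ in one frame if and only if $B=0$ in all.

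No step here presents a genuine obstacle: the entire content is already encoded in the curvature decomposition \eqref{matrdeco} and the explicit tables, so the argument is pure bookkeeping. The only point requiring care is the orientation accounting in the first two items — keeping straight that passing to a negatively oriented coframe interchanges $\Lambda_+$ with $\Lambda_-$, and hence $A$ with $C$, which is what produces the parenthetical ``respectively'' statements.
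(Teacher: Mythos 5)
Your proposal is correct and takes essentially the same approach as the paper: the paper's proof consists exactly of the ``direct check of the entries of $A$, $B$ and $C$ and the coefficients of $W^+$ and $W^-$'' that you carry out in detail, reading the (anti-)self-duality statements off the Weyl table (which involves only $A-\frac{S}{12}I_3$ and $C-\frac{S}{12}I_3$) and the Einstein statement off the Ricci expressions for the entries of $B$. Your orientation bookkeeping (exchange of $\Lambda_+$ with $\Lambda_-$, invariance under \eqref{abctrans}) likewise matches the paper's remarks following \eqref{lamplusminus} and \eqref{matrdeco}.
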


\

\section{The twistor space of a four-manifold}\label{secTwist}

Let $(M,g_M)$ be a connected Riemannian manifold of dimension $2m$. We define
its \emph{twistor space} $Z$ associated to $M$
as the set of the pairs $(p,J_p)$, where $p\in M$ and
$J_p$ is a $g$-orthogonal complex structure on $T_pM$. It is not hard
to show that the set of all $g$-orthogonal complex structures is
diffeomorphic to $O(2m)/U(m)$, where
\[
U(m):=\{A\in O(2m):A^TJ_m=J_mA\}
\]
and $J_m$ is a matrix in $O(2m)\cap\mathfrak{so}(2m)$ with entries
$(J_m)_{kl}=\delta_k^{l+1}-\delta_l^{k+1}$ (see, for instance, 
\cite{debnan}); therefore, it can be shown
that, if we denote as $O(M)$ as the orthonormal frame bundle of $M$,
$Z$ is the associated bundle
\[
Z=O(M)\times_{O(2m)}(O(2m)/U(m))=O(M)/U(m).
\]
This means that
there exists a surjective smooth map $\sigma:O(M)\longra Z$ such that
$\sigma$ defines a principal bundle $(O(M),Z,U(m))$ with structure group
$U(m)$. Moreover, the map
\begin{align*}
\pi_Z:Z&\longra M\\
(p,J_p)&\longmapsto p
\end{align*}
determines a fiber bundle $(Z,M,O(2m)/U(m),O(2m))$ with structure group
$O(2m)$ and standard fiber $O(2m)/U(m)$ (see \cite{kobnom1}).

Now, observe that, if $M$ is oriented, the orthonormal frame bundle is
not connected: indeed, in this case $O(M)$ has two connected components
\[
O(M)=O(M)_+\sqcup O(M)_-,
\]
where $O(M)_+$ (resp., $O(M)_-$) is the bundle of positively (resp.,
negatively) oriented frames on $M$.
As a consequence, $Z$ itself has two connected components 
\[
Z_{\pm}=\bigslant{O(M)_{\pm}}{U(m)}=\bigslant{SO(M)}{U(m)},
\]
where $SO(M)$ denotes the orthonormal
oriented frame bundle over $M$; moreover, one can obviously define
the bundle projections 
\[
\pi_{Z_{\pm}}:Z_{\pm}\longra M \quad \mbox{ and } \quad
\sigma_{\pm}:O(M)_{\pm}\longra Z_{\pm}.
\]
In accordance to much of the literature (see, for instance, 
\cite{salamon}), by convention
we choose $Z_-$ to be the twistor space
associated to a Riemannian manifold $(M,g)$; therefore, from now on,
$Z=Z_-$, $\pi_Z=\pi_{Z_-}$ and $\sigma=\sigma_-$. 

It is known that, in general, there exists a one-parameter family
of Riemannian metrics $g_t$ on $Z$, with $t>0$, constructed as 
pullbacks \emph{via} $\sigma$ 
of the unique (up to multiplication by a positive constant) 
$O(2m)$-invariant
Riemannian metric on $O(M)$ which is also horizontal with respect
to $\sigma$
(see \cite{debnan}, \cite{frikur}
and \cite{jenrig}): as a consequence, the map $\sigma$ becomes a Riemannian
submersion and the fibers are totally geodesic submanifolds of 
$O(M)$. 

Let $m=2$; from now on, we adopt the index
conventions $1\leq a,b,c,\ldots\leq 4$ and $1\leq p,q,\ldots,\leq 6$.
Given a local orthonormal coframe $\{\omega^a\}_{a=1,\ldots,4}$ on
an open set $U\subset M$, with Levi-Civita connection forms $\{\omega_b^a\}$,
we define
\begin{equation*}
	\omega^5:=\frac{1}{2}\pa{\omega^1_3+\omega^4_2}, \quad
	\omega^6:=\frac{1}{2}\pa{\omega^1_4+\omega^2_3};
\end{equation*}
a local orthonormal coframe on $(Z,g_t)$ is obtained by considering
the pullbacks of $\omega^1,\ldots,\omega^6$ \emph{via} a smooth local section
$u:U\longra O(M)$ of the principal $U(m)$-bundle defined by $\sigma$, where
$U$ is a suitable open subset of $Z$.
This means that
\begin{equation} \label{metrictwist}
	g_t=\sum_{p=1}^6(\theta^p)^2,
\end{equation}
where
\begin{equation} \label{cofrtwist}
	\theta^a:=u^{\ast}(\omega^a), \quad \theta^5:=2tu^{\ast}(\omega^5), \quad
	\theta^6:=2tu^{\ast}(\omega^6);
\end{equation}
for the sake of simplicity, we write $\omega^a$ for $u^{\ast}(\omega^a)$ and
similarly for $2t\omega^5$ and $2t\omega^6$. By \eqref{metrictwist} and
\eqref{cofrtwist}, we can write
\begin{equation} \label{metrictwist2}
g_t = g_M + 4t^2 \sq{\pa{\omega^5}^2+\pa{\omega^6}^2}=g_M + 
(\theta^5)^2+(\theta^6)^2
\end{equation}
(again the pullback notation is omitted). In order to compute the Levi-Civita connection forms $\theta_q^p$
and the curvature forms $\Theta_q^p$ for
the orthonormal coframe defined in \eqref{cofrtwist}, recall the
structure equations \eqref{firststruc} and \eqref{secondstruc}.
By direct computation, we obtain (see \cite{jenrig})
\begin{align} \label{connformtwist}
	\theta^a_b &= \omega^a_b + \frac{1}{2}t\pa{\qt{ba}\theta^5+\qq{ba}\theta^6}, \\ \theta^5_b &=\frac{1}{2}t\qt{ba}\theta^a, \quad \theta^6_b= \frac{1}{2}t\qq{ba}\theta^a; \\ \theta^5_6 &= \omega^1_2+\omega^3_4.
\end{align}
Now, let us denote as $\overline{\operatorname{Riem}}$,
$\overline{\ricc}$ and $\overline{S}$ the Riemann curvature tensor,
the Ricci tensor and the scalar curvature of $(Z,g_t)$, respectively;
by \eqref{secondstruc} and \eqref{omegariem}, it is easy to obtain the
coefficients $\overline{R}_{pqrs}$, $\overline{R}_{pq}$ and the
scalar curvature $\overline{S}$ in terms of $\qd{ab}$, $\qt{ab}$
and $\qq{ab}$ (see \cite{cdmtwist}).

It is possible to introduce two almost complex structures on $(Z,g_t)$.
Indeed, let $\{\theta^p\}$ be the orthonormal coframe defined in
\eqref{cofrtwist} and $\{e_p\}$ be its dual frame; then,
we define
\begin{equation} \label{twistalmcomp}
	J_{\pm}=\theta^1\otimes e_2 - \theta^2\otimes e_1 +
	\theta^3\otimes e_4 - \theta^4\otimes e_3 \pm\theta^5\otimes e_6
	\mp\theta^6\otimes e_5.
\end{equation}
It is clear that $J_+$ and $J_-$, which were introduced by
Atiyah, Hitchin and Singer (\cite{athisin}) and by Eells and Salamon
(\cite{elsal}), respectively,
are $g_t$-orthogonal almost
complex structures on $(Z,g_t)$, that is, $(Z,g_t,J_{\pm})$ is
an almost Hermitian manifold. We can write the local expression of the almost
complex structures with respect to \eqref{cofrtwist} and the
dual orthonormal frame as
\begin{align*}
	J_{\pm}&=(J_{\pm})_p^q\theta^p\otimes e_q,
\end{align*}
where
\[
(J_+)_1^2=(J_+)_3^4=(J_+)_5^6=(J_-)_1^2=(J_-)_3^4=-(J_-)_5^6=1,
\]
and all other components are zero.

We can compute the components 
of $\nabla J_+$ and $\nabla J_-$ with respect to the coframe defined in 
\eqref{cofrtwist}, recalling that
\begin{equation} \label{nablajcomp}
	\nabla J_{\pm}=(J_{\pm})_{q,t}^p\theta^t\otimes\theta^q\otimes e_p,
\end{equation}
where 
\[
(J_{\pm})_{q,t}^p\theta^t=d(J_{\pm})_q^p-
(J_{\pm})_s^p\theta_q^s+(J_{\pm})_q^s\theta_s^p;
\]
the components derived from \eqref{nablajcomp}, which depend on the
curvature of $(M,g)$, 
will be central in the proof of 
Theorems \ref{linselfdual} and \ref{linselfdual2} (for the explicit
expression of the components in terms of \eqref{qdef}, see \cite{cdmtwist}). 

The almost complex structures $J_+$ and $J_-$ induce the corresponding
Nijenhuis tensors $N_{J_+}$ and $N_{J_-}$, locally defined by
\begin{equation} \label{nijen}
	N_{J_{\pm}} = (N_{\pm})^p_{tq}\theta^t\otimes\theta^q\otimes e_p, \quad (N_{\pm})^p_{tq}=-(N_{\pm})^p_{qt},
\end{equation}
where
\begin{equation}\label{nijenComp}
	(N_{\pm})_{pq}^r=(J_{\pm})_p^s(J_{\pm})_{s,q}^r-(J_{\pm})_q^s(J_{\pm})_{s,p}^r+(J_{\pm})_q^s(J_{\pm})_{p,s}^r-(J_{\pm})_p^s(J_{\pm})_{q,s}^r;
\end{equation}
we recall that, by the well-known Newlander-Nirenberg Theorem 
(\cite{newlnir}), the vanishing of the Nijenhuis tensor is equivalent to
the integrability of the almost complex structure, which, in this case,
is induced by a holomorphic atlas of charts. 

As said in the introduction, 
the problem of the integrability of $J_+$ and $J_-$ was solved by Atiyah, 
Hitchin and Singer (\cite{athisin}) and by Eells and Salamon (\cite{elsal}),
respectively: indeed, $J_+$ is integrable if and only if $(M,g)$ is a self-dual
manifold, while $J_-$ is never integrable. However, in the same paper 
\cite{elsal} the authors proved a correspondence between minimal 
$2$-dimensional submanifolds of $(M,g)$ and $(J_-)$-holomorphic curves in
$Z$. 

For the explicit expressions of \eqref{nablajcomp} and \eqref{nijen} in terms of $\qd{ab}$, $\qt{ab}$ and 
$\qq{ab}$, we refer the reader to \cite{cdmtwist}.

\

\section{Linear conditions on $J_+$ and $J_-$: proof of Theorem \ref{TH:GeneralLinear}}\label{secLin}

In this section we show that, given a Riemannian
four-manifold $M$ and its twistor space $Z$, any linear condition
on the covariant derivative of the almost complex structures $J_+$ and
$J_-$ on $Z$ implies that $M$ is self-dual. From now on, we
will constantly make use of the components of $\nabla J_{\pm}$
and $N_{J_{\pm}}$ listed in \cite{cdmtwist}.

Let us start with the following proposition, which should be compared with Theorem \ref{cond4mani}:
\begin{proposition} \label{suffcond}
	Let $(M,g)$ be an oriented Riemannian four-manifold. Let
	\[
	\mathcal{R}=
	\left(
	\begin{array}{cc}
	A & B^T\\
	B & C
	\end{array}
	\right)
	\]
	be the decomposition of the Riemann curvature operator on $M$,
	with $A=(A_{ij})_{i,j=1,2,3}$, $B=(B_{ij})_{i,j=1,2,3}$
	and $C=(C_{ij})_{i,j=1,2,3}$.
	Then,
	\begin{enumerate}
		\item $M$ is self-dual if and only if,
		for every negatively oriented local orthonormal coframe, $A_{ij}=0$ for some
		$i\neq j$ or $A_{kk}=A_{ll}$ for some $k,l$;
		\item $M$ is Einstein if and only if, for
		every negatively oriented local orthonormal coframe, $B_{ij}=0$ for some
		$i,j$.
	\end{enumerate}
\end{proposition}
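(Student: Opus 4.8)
The plan is to reduce both statements to Theorem \ref{cond4mani}, which characterizes self-duality by $A=\frac{S}{12}I_3$ (for negatively oriented coframes) and the Einstein condition by $B=0$, combined with the transformation laws \eqref{abctrans} and the surjectivity of $\mu$. The key observation is that, since $\mu\colon SO(4)\to SO(3)\times SO(3)$ is surjective, as the negatively oriented orthonormal coframe varies at a fixed point the blocks transform by $a_+^{-1}(\cdot)a_+$ and $a_-^{-1}(\cdot)a_+$ with $(a_+,a_-)$ ranging over all of $SO(3)\times SO(3)$. Thus the orbit of $A$ is the full conjugation orbit $\{g^{-1}Ag:g\in SO(3)\}$, while that of $B$ is $\{h^{-1}Bg:g,h\in SO(3)\}$. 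For both parts the ``only if'' direction is then immediate: if $M$ is self-dual, $A=\frac{S}{12}I_3$ in every negatively oriented coframe, so all off-diagonal entries vanish and all diagonal entries coincide; if $M$ is Einstein, $B=0$, so every entry of $B$ vanishes.

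For the converse of $(1)$ I would fix a point, regard $A$ as a symmetric $3\times3$ matrix, and note that the hypothesis says that for every $g\in SO(3)$ at least one of the six real-analytic functions $g\mapsto(g^{-1}Ag)_{ij}$ ($i<j$) and $g\mapsto(g^{-1}Ag)_{kk}-(g^{-1}Ag)_{ll}$ ($k<l$) vanishes. Hence the union of their zero sets is all of $SO(3)$. Since $SO(3)$ is connected and these functions are real-analytic, each zero set is either all of $SO(3)$ or nowhere dense, and a finite union of nowhere-dense sets cannot cover a Baire space; therefore at least one of the six functions vanishes identically. If an off-diagonal entry vanishes identically, then $\pair{Au,v}=0$ for every orthonormal pair $u,v$ — because columns $i,j$ of an arbitrary $g\in SO(3)$ realize every orthonormal pair — which forces $Au\in\operatorname{span}(u)$ for all $u$, i.e. $A=\lambda I_3$; if instead a diagonal difference vanishes identically, then $\pair{Au,u}$ is constant on the unit sphere (one connects non-orthogonal vectors through $u\times v$), again forcing $A=\lambda I_3$. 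In either case $A=\frac{S}{12}I_3$ and Theorem \ref{cond4mani} yields self-duality.

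The converse of $(2)$ follows the same template: the hypothesis makes the union of the nine zero sets of $(g,h)\mapsto(h^{-1}Bg)_{ij}$ cover the connected analytic manifold $SO(3)\times SO(3)$, so one of these functions is identically zero. Since the $i$-th column of $h$ and the $j$-th column of $g$ range independently over the whole sphere, this yields $u^{\top}Bv=0$ for all unit vectors $u,v$, hence $B=0$, and Theorem \ref{cond4mani} gives that $M$ is Einstein.

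The only genuinely non-formal point — and the step I expect to be the main obstacle to phrase cleanly — is the covering argument: one must guarantee that the relevant entries, seen as functions of the rotation, are either identically zero or vanish only on a nowhere-dense set. I would justify this by observing that in any analytic chart these entries are (trigonometric) polynomials in the coordinates of $g$ (resp. $(g,h)$), so real-analyticity and connectedness of $SO(3)$ (resp. $SO(3)\times SO(3)$) apply, and a finite union of proper real-analytic zero sets has empty interior by the Baire category theorem.
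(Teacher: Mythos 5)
Your proof is correct, and it takes a genuinely different route from the paper's. The paper argues by explicit conjugation: assuming one \emph{fixed} condition (say $A_{12}=0$, or $B_{11}=0$) holds in every negatively oriented coframe, it writes down specific rotations $a_+$ (resp.\ pairs $(a_+,a_-)$) and uses the transformation laws \eqref{abctrans} to annihilate the remaining entries one at a time until $A$ is scalar (resp.\ $B=0$), declaring the other index choices ``analogous''. You replace this by two structural steps: first, a Baire-category/real-analyticity argument on the compact connected manifolds $SO(3)$ and $SO(3)\times SO(3)$, which upgrades the hypothesis ``at every frame \emph{some} entry in a finite list vanishes'' to ``one fixed entry vanishes identically on the whole orbit''; second, a coordinate-free linear-algebra step (columns of rotations realize every orthonormal pair, resp.\ every pair of unit vectors) showing that an identically vanishing off-diagonal entry makes every unit vector an eigenvector of $A$, an identically vanishing diagonal difference makes $u\mapsto\langle Au,u\rangle$ constant on the unit sphere, and an identically vanishing entry of $h^{-1}Bg$ forces $B=0$; the normalization $\operatorname{tr}A=S/4$ then gives $A=\frac{S}{12}I_3$, and Theorem \ref{cond4mani} concludes. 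What your route buys is that it proves the proposition under its literal quantifier reading, in which the vanishing index pair is allowed to \emph{depend} on the coframe; the paper's proof only treats the case of a fixed index pair vanishing on all of $O(M)_-$ (which is also the form actually invoked later, e.g.\ in Theorem \ref{linselfdual}), and your Baire step is exactly the bridge between the two readings. What the paper's route buys is complete elementarity: finitely many explicit matrix computations, with no appeal to analyticity or category arguments.
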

\begin{proof}
Recall the transformation laws for $A$, $B$ and $C$ defined in
\eqref{abctrans} and the surjective Lie group homomorphism \eqref{mudef}.

$(1)$ If $M$ is self-dual, then $A$ is a scalar matrix with
	$A_{ij}=\frac{S}{12}\delta_{ij}$. Conversely, let us prove the claim
	for $A_{12}=0$ and $A_{11}=A_{22}$, since the other cases can be shown in
	an analogous way. If, for every negatively oriented orthonormal coframe,
	$A_{12}=0$, then the matrix $A$ has the form
	\[
	A=
	\pmat{
		A_{11} & 0 & A_{13}\\
		0 & A_{22} & A_{23}\\
		A_{13} & A_{23} & A_{33}
	}
	\]
	on $O(M)_-$. Equivalently, for every smooth change of frames
	$a:U\longra SO(4)$, the transformed matrix $\tilde{A}$ is such that
	$\tilde{A}_{12}=0$. Thus, let us choose $a\in SO(4)$ such that
	$\mu(a)=(a_+,a_-)$, where
	\[
	a_+=
	\pmat{
		0 & 0 & -1\\
		0 & 1 & 0\\
		1 & 0 & 0
	}.
	\]
	We have that
	\[
	\tilde{A}=a_+^{-1}Aa_+=
	\pmat{
		A_{33} & A_{23} & -A_{13}\\
		A_{23} & A_{22} & 0\\
		-A_{13} & 0 & A_{11}
	};
	\]
	thus, $\tilde{A}_{12}=A_{23}=0$ on $O(M)_-$, that is, $A$ has the form
	\[
	A=
	\pmat{
		A_{11} & 0 & A_{13}\\
		0 & A_{22} & 0\\
		A_{13} & 0 & A_{33}
	}.
	\]
	Repeating the argument on $A$ with
	\[
	a_+=
	\pmat{
		1 & 0 & 0\\
		0 & 0 & -1\\
		0 & 1 & 0
	},
	\]
	we obtain that $A_{13}=0$; hence, $A$ is a diagonal matrix. Choosing
	other suitable changes of frames, it is not hard to show that
	$A_{11}=A_{22}=A_{33}$, i.e. $A$ is a scalar matrix; by Theorem \eqref{cond4mani},
	$M$ is self-dual.
	
	Similar computations show that, if $A_{11}=A_{22}$ on $O(M)_-$, then
	$A$ is a scalar matrix, i.e. $M$ is self-dual.

\smallskip

$(2)$ If $M$ is Einstein, then $B=0$ by Theorem \eqref{cond4mani}.
	Conversely, suppose, for instance,
	that $B_{11}=0$ on $O(M)_-$ (the other cases can be proved analogously).
	Again, this means that, for every change of frames $a$, the transformed
	matrix $\tilde{B}$ is such that $\tilde{B}_{11}=0$. Let us choose
	$a\in SO(4)$ such that $\mu(a)=(a_+,a_-)$, where
	\[
	a_-=I_3, \qquad
	a_+=
	\pmat{
		0 & -1 & 0\\
		1 & 0 & 0\\
		0 & 0 & 1
	}.
	\]
	Hence, we have that
	\[
	\tilde{B}=a_-^{-1}Ba_+=
	\pmat{
		B_{12} & 0 & B_{13}\\
		B_{22} & -B_{21} & B_{23}\\
		B_{32} & -B_{31} & B_{33}
	};
	\]
	this implies that $\tilde{B}_{11}=B_{12}=0$. By the same argument,
	if we choose
	\[
	a_+=
	\pmat{
		0 & 0 & -1\\
		0 & 1 & 0\\
		1 & 0 & 0
	},
	\]
	we conclude that $B_{13}=0$. Now, repeating the argument with
	suitable choices of $a_+$ and $a_-$, we obtain
	\[
	B_{21}=B_{22}=B_{23}=B_{31}=B_{32}=B_{33}=0,
	\] \\
	that is, $B=0$. Therefore, $M$ is Einstein by Theorem \eqref{cond4mani}.
\end{proof}
The first main result of this section is the following (see Theorem \ref{TH:GeneralLinear} in the introduction)
\begin{theorem} \label{linselfdual}
Let $(M,g)$ be a Riemannian four-manifold and let $(Z,g_t,J_+)$ be its twistor space.
If, for every $X,Y\in\mathfrak{X}(Z)$,
\begin{align}  \label{lincond}
&a_1(\nabla_X J_+)Y+a_2(\nabla_Y J_+)X+a_3(\nabla_{J_+X} J_+)Y+a_4(\nabla_{J_+Y} J_+)X+
a_5(\nabla_{J_+X} J_+)J_+Y+\\
&+a_6(\nabla_{J_+Y} J_+)J_+X+a_7(\nabla_{X} J_+)J_+Y+a_8(\nabla_{Y} J_+)J_+X=0 \notag
\end{align}
for some $a_i\in\erre$, $i=1,\ldots,8$, such that $a_j\neq 0$ for some $j$, then,
$M$ is self-dual.
\end{theorem}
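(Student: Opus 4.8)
The plan is to turn the tensorial hypothesis \eqref{lincond} into a finite system of scalar equations in the orthonormal coframe on $Z$ adapted to $J$, and then to read off from that system a vanishing condition on the self-dual curvature block $A$ of the type isolated in Proposition \ref{suffcond}(1), which characterizes self-duality. First I would insert $X=e_p$, $Y=e_q$ (with $1\le p,q\le 6$) into \eqref{lincond}, using $(\nabla_{e_p}J)e_q=J^r_{q,p}e_r$ and $Je_p=J^s_pe_s$; taking the $e_r$-component turns \eqref{lincond} into
\[
\begin{aligned}
&a_1 J^r_{q,p}+a_2 J^r_{p,q}+a_3 J^a_p J^r_{q,a}+a_4 J^a_q J^r_{p,a}\\
&\quad +a_5 J^a_p J^b_q J^r_{b,a}+a_6 J^a_q J^b_p J^r_{b,a}+a_7 J^b_q J^r_{b,p}+a_8 J^b_p J^r_{b,q}=0
\end{aligned}
\]
for all $p,q,r$. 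Since the constants $J^s_t$ are nonzero only on the pairs $(1,2),(3,4),(5,6)$, and since \eqref{eq:1.2Morris} gives $J^c_b J^r_{c,a}=-J^r_s J^s_{b,a}$, the eight summands collapse into a combination of the four basic tensors $(\nabla_{e_p}J)e_q$, $(\nabla_{e_q}J)e_p$, $(\nabla_{Je_p}J)e_q$, $(\nabla_{Je_q}J)e_p$, each occurring with or without a factor of $J$. This is the reduction I would perform first, to keep the bookkeeping manageable.

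Next I would substitute the explicit values of the components $J^r_{p,q}$ of $\nabla J$ on $(Z,g_t,J)$ collected in the appendix (see \eqref{nablajplus}). These are, up to the factor $t$, linear expressions in the curvature quantities $\qt{ab}$ and $\qq{ab}$, hence in the entries of the self-dual block $A$ and of the off-diagonal block $B$ of the curvature operator \eqref{matrdeco}. After this substitution each scalar equation above becomes a linear relation, with coefficients that are fixed linear combinations of $a_1,\dots,a_8$, among the $A_{ij}$ and $B_{ij}$ evaluated in the frame determined by the chosen point of $Z$.

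The key structural point I would then exploit is that \eqref{lincond}, being required for all $X,Y$ and at every point of $Z$, yields these relations simultaneously for every frame adapted to a complex structure; since moving vertically in the fiber $SO(4)/U(2)$ acts on the self-dual block by $A\mapsto a_+^{-1}Aa_+$, the derived relations hold for $A$ expressed in every negatively oriented orthonormal coframe of $M$. The goal is to show that, whenever some $a_j\neq 0$, at least one of these relations forces either an off-diagonal entry $A_{ij}$ ($i\neq j$) to vanish, or two diagonal entries $A_{kk}=A_{ll}$ to coincide, in every such coframe; Proposition \ref{suffcond}(1) then gives at once that $M$ is self-dual. I expect the main obstacle to be precisely this last step: one must check that for no nonzero choice of $(a_1,\dots,a_8)$ do all the relevant coefficient-combinations degenerate to zero simultaneously. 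I would organize the verification along the two symmetries of the eight terms — the exchange $X\leftrightarrow Y$ (pairing $\{a_1,a_2\}$, $\{a_3,a_4\}$, $\{a_5,a_6\}$, $\{a_7,a_8\}$) and conjugation by $J$ (relating the $a_5,\dots,a_8$ block to the $a_1,\dots,a_4$ block through \eqref{eq:1.2Morris}) — which should cut the a priori many configurations down to a few essentially distinct ones, each producing a Proposition \ref{suffcond}(1) condition with a nonvanishing coefficient. The corresponding assertion for $\JJ$ in Theorem \ref{TH:GeneralLinear} would follow by the same scheme, using \eqref{nablajmin} in place of \eqref{nablajplus}.
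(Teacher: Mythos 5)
Your setup coincides with the paper's own: localize \eqref{lincond} at a frame by putting $X=e_p$, $Y=e_q$, substitute the explicit components \eqref{nablajplus}, observe that the hypothesis holds at every point of $Z$ and hence for every negatively oriented orthonormal coframe of $M$, and invoke Proposition \ref{suffcond}(1). The genuine gap is that you never carry out the final verification, which you yourself flag as the main obstacle: you must exhibit, for every nonzero coefficient vector $(a_1,\dots,a_8)$, specific index choices $(p,q,t)$ producing a nondegenerate linear system in the entries of $A$. That verification is the entire mathematical content of the proof; the symmetry considerations you propose (the exchange $X\leftrightarrow Y$ and $J$-conjugation) only tell you how to group the cases, not that each case actually closes.

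Concretely, the paper resolves this in three steps. Choosing $(p,q,t)=(5,2,1)$ and $(5,4,3)$ in the localized equation and summing gives $(a_8-a_4)A_{12}-(a_2+a_6)A_{13}=0$, while $(6,2,1)$ and $(6,4,3)$ give $(a_2+a_6)A_{12}+(a_8-a_4)A_{13}=0$; the determinant of this system is $(a_8-a_4)^2+(a_2+a_6)^2$, so if $a_8\neq a_4$ or $a_2\neq-a_6$ (or, swapping $p$ and $q$, if $a_7\neq a_3$ or $a_1\neq-a_5$) then $A_{12}=A_{13}=0$ in every frame and Proposition \ref{suffcond}(1) concludes. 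Otherwise $a_1=-a_5$, $a_2=-a_6$, $a_3=a_7$, $a_4=a_8$, and the choices $(5,3,1)$, $(6,3,1)$ yield a system in $A_{22}-A_{33}$ and $A_{23}$ with determinant proportional to $(a_1+a_2)^2+(a_3+a_4)^2$; if that is nonzero one concludes again. In the last remaining case one also has $a_1=-a_2$ and $a_3=-a_4$, so the whole vector is determined by $(a_1,a_3)\neq(0,0)$, and the choices $(3,1,5)$, $(4,1,5)$ give a system with determinant $a_1^2+4a_3^2>0$, forcing $A_{22}=A_{33}$ and $A_{23}=0$. The eight coefficient combinations appearing in these three steps vanish simultaneously only when all $a_i=0$, which is exactly why the case split is exhaustive. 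Without producing these (or equivalent) index choices and the accompanying nondegeneracy checks, your argument remains a plan rather than a proof.
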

\begin{proof}
For the sake of simplicity, we will denote $(J_+)_{q,t}^p=J_{q,t}^p$.
First, we rewrite the equality in \eqref{lincond} with respect to a local orthonormal
frame $\{e_t\}_{t=1,\ldots,6}$ by putting $X=e_p, Y=e_q$. This implies that
\begin{align} \label{lincondort}
a_1J_{q,p}^t+a_2J_{p,q}^t+a_3J_p^rJ_{q,r}^t+a_4J_q^rJ_{p,r}^t+a_5J_p^rJ_q^sJ_{s,r}^t+
a_6J_q^rJ_p^sJ_{s,r}^t+a_7J_q^sJ_{s,p}^t+a_8J_p^sJ_{s,q}^t=0
\end{align}
for every $p,q,t=1,\ldots,6$. We now proceed by steps.

$(1)$ We start by considering \eqref{lincondort} with $p=5,q=2,t=1$, i.e.
	\[
	(a_8-a_4)\qt{12}-(a_2+a_6)\qq{12}=0.
	\]
	Putting $p=5,q=4,t=3$, we easily obtain
	\[
	(a_8-a_4)\qt{34}-(a_2+a_6)\qq{34}=0;
	\]
	Summing these two equalities, we can write
	\[
	(a_8-a_4)A_{12}-(a_2+a_6)A_{13}=0.
	\]
	Repeating the argument with $p=6,q=2,t=1$ and $p=6,q=4,t=3$ we have that
	\[
	(a_2+a_6)A_{12}+(a_8-a_4)A_{13}=0.
	\]
	Thus, we deduce the following system of equations:
	\[
	\begin{cases}
	(a_8-a_4)A_{12}-(a_2+a_6)A_{13}=0,\\
	(a_2+a_6)A_{12}+(a_8-a_4)A_{13}=0.
	\end{cases}
	\]
	If at least one of the coefficients $(a_8-a_4)$ and $(a_2+a_6)$ is
	different from $0$, we must have that $A_{12}=A_{13}=0$ on $O(M)_-$, since
	\eqref{lincond} is a global condition. By Proposition \ref{suffcond},
	$M$ is self-dual.
	
	Note that, if we exchange the values of $p$ and $q$ in all the previous calculations,
	the following system holds:
	\[
	\begin{cases}
	(a_7-a_3)A_{12}-(a_1+a_5)A_{13}=0,\\
	(a_1+a_5)A_{12}+(a_7-a_3)A_{13}=0.
	\end{cases}
	\]
	As before, if at least one of the coefficients $(a_1+a_5)$ and $(a_7-a_3)$ is different
	from zero, then $M$ is self-dual.

\smallskip

$(2)$ Now, we have to study the case
	\[
	a_1=-a_5,\quad a_2=-a_6,\quad a_3=a_7,\quad a_4=a_8,
	\]
	that is
	\[
	a_1(J_{q,p}^t-J_p^rJ_q^sJ_{s,r}^t)+a_2(J_{p,q}^t-J_q^rJ_p^sJ_{s,r}^t)
	+a_3(J_p^rJ_{q,r}^t+J_q^sJ_{s,p}^t)+a_4(J_q^rJ_{p,r}^t+J_p^sJ_{s,q}^t)=0.
	\]
	By choosing $p=5,q=3,t=1$ and $p=6,q=3,t=1$, we obtain
	\[
	\begin{cases}
	(a_1+a_2)(A_{22}-A_{33})+2(a_3+a_4)A_{23}=0,\\
	(a_3+a_4)(A_{22}-A_{33})-2(a_1+a_2)A_{23}=0.
	\end{cases}
	\]
	Again, if not all the coefficients vanish, the system holds if and only
	if $A_{22}=A_{33}$ and $A_{23}=0$. By Proposition \ref{suffcond}, $M$ is self-dual.
	\item Finally, we have to show the claim when
	\[
	a_1=-a_2,\quad a_3=-a_4.
	\]
	Choosing $p=3,q=1,t=5$ and $p=4,q=1,t=5$, we get the system
	\[
	\begin{cases}
	a_1(A_{22}-A_{33})+2a_3A_{23}=0,\\
	-a_3(A_{22}-A_{33})+2a_1A_{23}=0.
	\end{cases}
	\]
	Since, by hypothesis, at least one of the coefficients does not vanish,
	we conclude that $A_{22}=A_{33}$ and $A_{23}=0$, i.e. $M$ is self-dual.
\end{proof}
The previous theorem allows us to easily prove a well-known result, due
to Atiyah, Hitchin and Singer (\cite{athisin}):
\begin{theorem} \label{atiyahhitchsing}
Let $(M,g)$ be a Riemannian four-manifold and let $(Z,g_t,J_+)$ be its twistor space. Then,
the almost complex structure $J_+$ is integrable if and only if $M$ is self-dual.
\end{theorem}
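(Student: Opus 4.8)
The plan is to treat the two implications separately: the forward direction reduces immediately to Theorem \ref{linselfdual}, while the converse is a direct verification on the explicit components collected in the appendix.

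For the implication ``$J$ integrable $\Rightarrow$ $M$ self-dual'', I would invoke the Newlander--Nirenberg theorem (\cite{newlnir}), so that integrability of $J$ is equivalent to $N_J\equiv 0$ on $Z$. Rewriting the Nijenhuis tensor through the identity \eqref{eq:GlobaldefN2},
\[
N_J(X,Y) = (\nabla_Y J)(JX) - (\nabla_X J)(JY) + (\nabla_{JY} J)(X) - (\nabla_{JX} J)(Y),
\]
the condition $N_J(X,Y)=0$ for all $X,Y\in\mathfrak{X}(Z)$ is exactly the linear condition \eqref{lincond} with the specific coefficients $a_4=a_8=1$, $a_3=a_7=-1$ and $a_1=a_2=a_5=a_6=0$. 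Since these are not all zero, Theorem \ref{linselfdual} applies directly and forces $M$ to be self-dual. No computation is needed here beyond recognizing $N_J$ as an instance of \eqref{lincond}.

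For the converse ``$M$ self-dual $\Rightarrow$ $J$ integrable'', I would argue that $N_J\equiv 0$. Recalling that $Z=Z_-$ is modeled on the negatively oriented frame bundle $O(M)_-$, Theorem \ref{cond4mani} translates self-duality into $A=\frac{S}{12}I_3$ on $O(M)_-$; equivalently $A_{ij}=0$ for $i\neq j$ and $A_{11}=A_{22}=A_{33}$. I would then read off the explicit Nijenhuis components $N_{pq}^t$ from the appendix (equation \eqref{nijcompplus}) and observe that each of them is a linear combination of the off-diagonal entries $A_{ij}$ and of the diagonal differences $A_{ii}-A_{jj}$ --- precisely the trace-free part of $A$ and the very quantities appearing in the systems solved in the proof of Theorem \ref{linselfdual}. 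Under the scalar-matrix condition all of these vanish, so $N_{pq}^t=0$ for all $p,q,t$ and hence $N_J\equiv 0$; a second application of Newlander--Nirenberg gives that $J$ is integrable.

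The only substantive work lies in the converse, and it is bookkeeping rather than conceptual: one must confirm from the appendix that no component $N_{pq}^t$ depends on the trace $A_{11}+A_{22}+A_{33}$ (equivalently on $S$) nor on the entries of $B$ or $C$ in isolation, but solely on the trace-free part of $A$. I expect this to be the main obstacle, though a routine one, since it amounts to verifying that the Nijenhuis tensor of the Atiyah--Hitchin--Singer structure on $Z_-$ is governed entirely by the self-dual Weyl curvature, which is the geometric heart of the statement.
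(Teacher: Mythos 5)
Your proposal is correct and follows essentially the same route as the paper: the implication ``integrable $\Rightarrow$ self-dual'' is obtained by recognizing $N_J\equiv 0$, via \eqref{eq:GlobaldefN2}, as the linear condition \eqref{lincond} with $a_4=a_8=-a_3=-a_7=1$ and $a_1=a_2=a_5=a_6=0$, and then invoking Theorem \ref{linselfdual}, while the converse is the same direct inspection of the components \eqref{nijcompplus}, which are multiples of $A_{22}-A_{33}$ and $A_{23}$ and hence vanish when $A$ is scalar on $O(M)_-$. Your write-up is in fact slightly more detailed than the paper's (explicitly citing Newlander--Nirenberg and spelling out why the appendix components vanish), but the substance is identical.
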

\begin{proof}
Recall that an almost complex structure is integrable if and only if the associated
Nijenhuis tensor identically vanishes. Thus, by direct inspection of the components,
if $M$ is self-dual, the Nijenhuis tensor $N_{J_+}$ is identically null. Conversely,
note that the condition of integrability for $J_+$ corresponds to the equation \eqref{lincond}
with
\[
a_1=a_2=a_5=a_6=0, \qquad a_4=a_8=-a_3=-a_7=1.
\]
Thus, if $N_{J_+}$ vanishes identically, then $M$ is self-dual.
\end{proof}
\begin{rem}
	\begin{enumerate}
	\item We point out that, as mentioned by Apostolov, Grantcharov and 
	Ivanov (\cite{apoherm}), for every point $p\in M$ 
	such that $\weyl^-\neq 0$
	at $p$ with non-degenerate spectrum, 
	there exist exactly two almost complex structures
	$J_1$ and $J_2$, determined up to sign, such that $N_{J_+}$
	vanishes at $(p,J_1)$, $(p,J_2)\in\pi_Z^{-1}(p)$ (see also
	\cite{apogaud}, \cite{salfour} and \cite{salorth}): in other words,
	by a direct computation of the components of $N_{J_+}$, 
	for every such $p\in M$ there exist two negatively
	oriented orthonormal frames
	$e_1$ and $e_2$ such that, with respect to those, 
	$A_{22}=A_{33}$ and $A_{23}=0$.
	\item A generalization of Theorem \ref{atiyahhitchsing} involving
	the divergences of the Nijenhuis tensors was
	obtained in \cite{cdmtwist} (Theorem 5.5).
	\end{enumerate}
\end{rem}
In the same spirit, one can provide an alternative proof for the
characterization results due to Mu\v{s}karov (\cite{mus}), which generalize 
a Theorem due to Friedrich and Kurke (\cite{frikur}): for instance, we
can show the following
\begin{theorem} \label{muskar1}	
	$(Z,g_t,J_+)$ is a $q^2$-\emph{K\"{a}hler manifold}, i.e., for
	every $X\in\mathfrak{X}(Z)$,
	\begin{equation} \label{qqkahl}
		(\nabla_XJ_+)X+(\nabla_{J_+X}J_+)J_+X=0,
	\end{equation}
	if and only if
	$M$ is Einstein, self-dual, with positive scalar curvature
	equal to $12/t^2$. In particular, this holds if and only if 
	$(Z,g_t,J_+)$ is a K\"{a}hler manifold. 
\end{theorem}
\begin{rem}
	Note that \eqref{qqkahl} is satisfied on any nearly K\"{a}hler manifold
	and on any almost K\"{a}hler manifold.
\end{rem}
\begin{proof}
	One direction is trivial: indeed, if $(M,g)$ is Einstein, self-dual
	with positive scalar curvature, then $(Z,g_t,J_+)$ is a K\"{a}hler
	manifold (\cite{frikur}), which implies that it is also 
	$q^2$-K\"{a}hler. 
	
	Conversely, let us suppose that $(Z,g_t,J_+)$ is 
	$q^2$-K\"ahler.
	Note that we can rewrite \eqref{qqkahl} as
	\[
	(\nabla_X J_+)Y+(\nabla_Y J_+)X+(\nabla_{J_+X} J_+)J_+Y+(\nabla_{J_+Y} J_+)J_+X=0
	\]
	for every $X,Y\in\mathfrak{X}(Z)$, which is \eqref{lincond} with
	\[
	a_3=a_4=a_7=a_8=0, \qquad a_1=a_2=a_5=a_6.
	\]
	This immediately implies that $M$ is self-dual. With some
	straightforward calculation using \eqref{qqkahl}, we obtain the system
	\[
	\begin{cases}
	B_{22}+B_{33}=\dfrac{S}{6}-\dfrac{2}{t^2},\\
	B_{33}-B_{22}=-\dfrac{S}{6}+\dfrac{2}{t^2}.
	\end{cases}
	\]
	Summing the two equations, we obtain $B_{33}=0$, i.e. $M$ is Einstein
	by Proposition \ref{suffcond}. Therefore, by the same system,
	we obtain that $S=12/t^2$.
	
%
%
\end{proof}
Now, we prove the analogous of Theorem \ref{linselfdual} for $J_-$:
\begin{theorem} \label{linselfdual2}
Let $(M,g)$ be a Riemannian four-manifold and let $(Z,g_t,J_-)$ be its twistor space. If
$\nabla J_-$ satisfies the equation in \eqref{lincond} for every $X,Y\in\mathfrak{X}(Z)$,
then $M$ is self-dual.
\end{theorem}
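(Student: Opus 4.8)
The plan is to run, line for line, the argument that proved Theorem \ref{linselfdual} for the Atiyah--Hitchin--Singer structure, feeding in the components of $\nabla\JJ$ recorded in \eqref{nablajmin} instead of those of $\nabla J$. First I would fix a local orthonormal frame $\{e_t\}_{t=1,\dots,6}$ adapted to $(Z,g_t)$ and set $X=e_p$, $Y=e_q$ in \eqref{lincond}, obtaining the scalar identities
\begin{align*}
&a_1\JJ_{q,p}^t+a_2\JJ_{p,q}^t+a_3\JJ_p^r\JJ_{q,r}^t+a_4\JJ_q^r\JJ_{p,r}^t\\
&\quad+a_5\JJ_p^r\JJ_q^s\JJ_{s,r}^t+a_6\JJ_q^r\JJ_p^s\JJ_{s,r}^t+a_7\JJ_q^s\JJ_{s,p}^t+a_8\JJ_p^s\JJ_{s,q}^t=0
\end{align*}
for all $p,q,t=1,\dots,6$, which is the exact analogue of \eqref{lincondort}. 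The only structural difference from the $J$--case is the sign of the fibre component, $\JJ_5^6=-1$ in place of $J_5^6=+1$. In the substitutions relevant to the self-dual block each contraction $\JJ_p^r(\cdot)$ or $\JJ_q^r(\cdot)$ meets the fibre indices $5,6$ exactly once, so this single sign change is precisely what turns the combinations $(a_8-a_4),(a_2+a_6),(a_7-a_3),(a_1+a_5)$ of Theorem \ref{linselfdual} into $a_4-a_8,\,a_2+a_6,\,a_3-a_7,\,a_1+a_5$, which are the four combinations named in the statement.

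Next I would carry out the substitutions $(p,q,t)=(5,2,1),(5,4,3),(6,2,1),(6,4,3)$, read off the entries of $\nabla\JJ$ from \eqref{nablajmin}, and add the resulting identities in pairs exactly as in step $(1)$ of the proof of Theorem \ref{linselfdual}. This should produce the homogeneous system
\[
\begin{cases}
(a_4-a_8)A_{12}-(a_2+a_6)A_{13}=0,\\
(a_2+a_6)A_{12}+(a_4-a_8)A_{13}=0,
\end{cases}
\]
together with its ``exchanged'' counterpart (obtained by swapping the roles of $p$ and $q$) carrying the coefficients $a_3-a_7$ and $a_1+a_5$. If any one of $a_1+a_5,\,a_2+a_6,\,a_3-a_7,\,a_4-a_8$ is nonzero, the determinant of the corresponding $2\times2$ system is strictly positive, forcing $A_{12}=A_{13}=0$ on all of $O(M)_-$, since \eqref{lincond} is a global identity; Proposition \ref{suffcond} then gives that $M$ is self-dual. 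This disposes of both assertions simultaneously in the generic case, the nonzero combination being one of the four listed.

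The delicate point, which I expect to be the main obstacle, is the degenerate case $a_5=-a_1$, $a_6=-a_2$, $a_7=a_3$, $a_8=a_4$, in which all four combinations vanish. For the AHS structure this case was still informative: steps $(2)$--$(3)$ of Theorem \ref{linselfdual} extracted $A_{22}-A_{33}$ and $A_{23}$ from the substitutions $(5,3,1),(6,3,1),(3,1,5),(4,1,5)$ through the \emph{new} combinations $a_1+a_2$ and $a_3+a_4$. The crucial difference for $\JJ$ is that, because of the opposite fibre orientation, these very substitutions re-assemble into the same four combinations $a_1+a_5,\,a_2+a_6,\,a_3-a_7,\,a_4-a_8$, all of which are now zero by assumption. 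Thus the plan for the ``moreover'' is to verify, by direct inspection of the entries of $\nabla\JJ$ in \eqref{nablajmin} (and, where convenient, of the Nijenhuis components in \eqref{nijcompmin}), that under $a_5=-a_1,\dots,a_8=a_4$ the left-hand side of the componentwise identity collapses term by term, so that \eqref{lincond} carries no curvature information at all. Consequently a genuine linear relation of the form \eqref{lincond} on $\nabla\JJ$ must have at least one of $a_1+a_5,\,a_2+a_6,\,a_3-a_7,\,a_4-a_8$ different from zero, and by the previous paragraph this already forces $M$ to be self-dual. The bulk of the work is therefore the careful sign-bookkeeping in passing from $J$ to $\JJ$ and the verification of this collapse in the degenerate case.
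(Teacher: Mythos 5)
Your first step is sound and coincides with the paper's: the argument of step $(1)$ of Theorem \ref{linselfdual} goes through verbatim for $\JJ$ (up to irrelevant signs in the four combinations), so if one of $a_1+a_5$, $a_2+a_6$, $a_3-a_7$, $a_4-a_8$ is nonzero, Proposition \ref{suffcond} yields self-duality. The gap is in your treatment of the degenerate case $a_5=-a_1$, $a_6=-a_2$, $a_7=a_3$, $a_8=a_4$, and it is twofold. First, your factual claim is wrong: the componentwise identity does \emph{not} collapse term by term. Taking $(p,q,t)=(5,3,1)$ in \eqref{lincondort} and reading the entries from \eqref{nablajmin}, the $a_1$- and $a_2$-terms do cancel (via the Bianchi identity $\qt{14}+\qt{23}=\qq{13}+\qq{42}$), but the $a_3$- and $a_4$-terms survive, and the identity becomes
\[
a_3\left(A_{22}+A_{33}-\frac{2}{t^2}\right)+a_4\left(A_{22}+A_{33}\right)=0;
\]
the constant $\frac{2}{t^2}$, entering through $\mathbf{J}^1_{3,6}$ and $\mathbf{J}^1_{4,5}$, survives precisely because of the sign flip $\JJ_5^6=-1$ (for $J$ these same substitutions reassembled into the curvature quantities $A_{22}-A_{33}$ and $A_{23}$). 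Pairing this with the substitution $(p,q,t)=(1,5,3)$ gives a homogeneous $2\times 2$ system in $(a_3,a_4)$ whose determinant equals $-\left(A_{22}+A_{33}-\frac{2}{t^2}\right)^2-\left(A_{22}+A_{33}\right)^2$, which is strictly negative because $A_{22}+A_{33}$ and $A_{22}+A_{33}-\frac{2}{t^2}$ cannot vanish simultaneously; hence $a_3=a_4=0$, and exchanging $p$ and $q$ forces $a_1=a_2=0$. So in the degenerate case \emph{all} coefficients vanish, contradicting the hypothesis that some $a_j\neq 0$: this contradiction is exactly how the paper proves both self-duality and the ``moreover'' clause.

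Second, even if your collapse claim were true, the conclusion you draw from it would be logically inverted. If \eqref{lincond} held vacuously for degenerate coefficients, then any non-self-dual manifold together with the admissible choice $a_1=1=-a_5$ (all other $a_i=0$) would satisfy the hypothesis of the theorem --- which only asks that the relation hold and that some $a_j\neq 0$ --- while violating its conclusion; your claim would therefore \emph{disprove} the statement rather than establish its ``moreover'' part. The qualifier ``genuine'' has no counterpart in the hypothesis, so the degenerate case cannot be discarded by fiat: it must be positively excluded, and the exclusion is driven by the $\frac{2}{t^2}$ terms as above. This is the actual content of the second half of the paper's proof, and it is the piece your proposal is missing.
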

\begin{proof}
The first step of the proof is identical to the one in Theorem \ref{linselfdual};
thus, if at least one of the coefficients $a_2+a_6$, $a_8-a_4$, $a_1+a_5$ or
$a_7-a_3$ is different from $0$,
we conclude that $M$ is self-dual. 
Now, suppose that
\[
a_1=-a_5,\quad a_2=-a_6, \quad a_3=a_7,\quad a_4=a_8:
\]
by choosing
$t=1,q=3,p=5$ and $t=3,q=5,p=1$, we have
\[
\begin{cases}
	a_3\pa{A_{22}+A_{33}-\frac{2}{t^2}}+a_4\pa{A_{22}+A_{33}}&=0\\
	a_4\pa{A_{22}+A_{33}-\frac{2}{t^2}}+a_3\pa{A_{22}+A_{33}}&=0.
\end{cases}
\]
If $a_3\neq a_4$, it is immediate to prove that the previous
system admits no solution: indeed, if, in addition, $a_3\neq -a_4$,
the only possible solution would be 
$A_{22}+A_{33}+\frac{2}{t^2}=A_{22}+A_{33}=0$, which is impossible,
while, if $a_3=-a_4$, we would get $\frac{2}{t^2}=0$.
Hence, let $a_3=a_4$: in this case, the system reduces to 
the equation
\[
a_3\pa{A_{22}+A_{33}-\dfrac{1}{t^2}}=0,
\] 
which implies that, if $a_3\neq 0$, $(M,g)$ is a self-dual manifold
with $S=6/t^2$. Moreover, choosing $t=1,q=3,p=6$, we conclude that 
$a_1=a_2$: therefore, the relations between the $a_i$'s are
\[
a_1=a_2=-a_5=-a_6, \quad a_3=a_4=a_7=a_8.
\]
Finally, if $a_3=0$, we can choose again $t=1,q=3,p=6$ and 
$t=3,q=6,p=1$ in order to find
\[
\begin{cases}
	a_1\pa{A_{22}+A_{33}-\frac{2}{t^2}}+a_2\pa{A_{22}+A_{33}}&=0\\
	a_2\pa{A_{22}+A_{33}-\frac{2}{t^2}}+a_1\pa{A_{22}+A_{33}}&=0
\end{cases};
\]
as before, we must have $a_1=a_2$ and, since $a_1\neq 0$ by hypothesis, we 
obtain again that $(M,g)$ is self-dual with $S=6/t^2$.
\end{proof}

\section{Quadratic conditions: proofs of Theorems \ref{TH:LocalQuadratic}, \ref{TH:LocalQuadratic2}}\label{secQuadr}

In this section we present some new results
about the twistor space associated to an Einstein four-dimensional
manifold whose metric is not necessarily self-dual; in
particular, we partially generalize the characterization
Theorem \ref{muskar1} and we introduce a new necessary
condition for a manifold to be Einstein, which leads to a
characterization of Einstein, non-self-dual manifolds.

As before, let $(M,g)$ be a connected, oriented Riemannian
manifold of dimension $4$ and $(Z,g_t,J_+)$ be its twistor space,
with $J_+$ the Atiyah-Hitchin-Singer almost complex structure
defined in \eqref{twistalmcomp}. 
Since, in this section, we will only consider $J_+$, for the sake of 
simplicity, we will write $J_{q,t}^p$
instead of $(J_+)_{q,t}^p$ and 
$N_{pq}^t$ instead of $(N_+)_{pq}^t$. 
We have the following (see Theorem \ref{TH:LocalQuadratic} in the 
introduction):
\begin{theorem} \label{einstquadr}
$(M,g)$ is Einstein if and only if, for every orthonormal frame in
$O(M)_-$ (equivalently, for every negatively oriented orthonormal coframe),
\begin{equation} \label{quadrcond}
\sum_{t=1}^6(J_{p,q}^t+J_{q,p}^t)(J_{p,p}^t-J_{q,q}^t)=0, \quad\forall p,q=1,\ldots,6.
\end{equation}
\end{theorem}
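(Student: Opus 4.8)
The plan is to translate the stated identity into a condition on the curvature operator of $M$ and then invoke Theorem \ref{cond4mani}, according to which $(M,g)$ is Einstein precisely when the off-diagonal block $B$ of $\mathcal{R}$ vanishes in every negatively oriented orthonormal coframe. Thus the whole statement reduces to showing that the quadratic system \eqref{quadrcond}, imposed on all of $O(M)_-$, is equivalent to $B\equiv 0$. The computational backbone is the explicit list of components of $\nabla J$ for the Atiyah--Hitchin--Singer structure (the table \eqref{nablajplus} in the appendix): each $J^t_{p,q}$ is an affine-linear function of the entries $A_{ij},B_{ij}$ and of the constant $1/t^2$, so the symmetric combinations $S^t_{pq}:=J^t_{p,q}+J^t_{q,p}$ and the diagonal differences $D^t_{pq}:=J^t_{p,p}-J^t_{q,q}$ are linear in $A,B,1/t^2$, and the left-hand side of \eqref{quadrcond} is a homogeneous quadratic polynomial in these. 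I would first record, for each relevant pair $(p,q)$, the two families of linear forms $S^t_{pq}$ and $D^t_{pq}$ as $t$ runs over $1,\dots,6$; note that both are components of the single symmetric nearly-K\"ahler defect tensor $(\nabla_XJ)Y+(\nabla_YJ)X$ of $(Z,g_t,J)$.

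For the forward implication I would substitute $B=0$. Then, exactly as recorded in the proof of Theorem \ref{muskar1} (where $(Z,g_t,J)\in\NK$ forces both $B=0$ and $A=\tfrac1{t^2}I_3$), the defect becomes governed solely by the self-dual Weyl part and the scalar curvature, so both $S^t_{pq}$ and $D^t_{pq}$ reduce to linear forms in the entries of $A-\tfrac1{t^2}I_3$ alone. It then remains to check that the quadratic form $A\mapsto\sum_t S^t_{pq}D^t_{pq}$ vanishes identically for every admissible (Einstein) $A$; since $S^t_{pq}$ is built from the symmetrized/off-diagonal part of the defect while $D^t_{pq}$ is built from its diagonal differences, this is a short finite verification against the table, pair by pair in $(p,q)$.

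The reverse implication is where the real work lies. Assuming \eqref{quadrcond} on all of $O(M)_-$, I would fix a point and suppose, for contradiction, that $B\neq0$. By the transformation laws \eqref{abctrans} the pair $(a_+,a_-)\in SO(3)\times SO(3)$ acts on $B$ by $B\mapsto a_-^{-1}Ba_+$, and since $\mu$ is surjective I may choose a negatively oriented coframe realizing the singular-value decomposition, i.e.\ $B=\operatorname{diag}(\beta_1,\beta_2,\beta_3)$. Substituting into the quadratic expressions, the system collapses to polynomial relations among the $\beta_i$ and the entries of $A$; the decisive point to extract is that each nonvanishing $\beta_i$ survives with a coefficient that no choice of $A$ can cancel --- in particular the "self-dual but non-Einstein" configuration (scalar $A$ with $B\neq0$) genuinely violates \eqref{quadrcond}. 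This forces $\beta_1=\beta_2=\beta_3=0$, hence $B=0$. Alternatively, it suffices to isolate a single entry $B_{ij}$ that the relations annihilate in every coframe and then quote Proposition \ref{suffcond}$(2)$. Either route gives $B\equiv0$ on $O(M)_-$, and $M$ is Einstein by Theorem \ref{cond4mani}.

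The main obstacle is precisely this disentangling step: because both $S^t_{pq}$ and $D^t_{pq}$ mix $A$ and $B$, one must rule out that a nonzero $B$ hides behind a suitable $A$ in the coupling $\sum_t S^t_{pq}D^t_{pq}$. The argument therefore hinges on exploiting the frame freedom \eqref{abctrans} to normalize $B$ and then on a careful, index-by-index reading of \eqref{nablajplus} to verify that the $\beta_i$-linear terms cannot be absorbed; this bookkeeping over all pairs $(p,q)$, rather than any conceptual difficulty, is the delicate part of the proof.
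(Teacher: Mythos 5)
Your global strategy for the converse (reduce \eqref{quadrcond} to the vanishing of the block $B$ and conclude via Proposition \ref{suffcond} and Theorem \ref{cond4mani}) is also the paper's, and your forward direction would go through. In fact the ``finite verification'' you defer reveals more than you anticipate: computing from \eqref{nablajplus}, the diagonal differences $J^t_{p,p}-J^t_{q,q}$ with $p,q\leq 4$ equal $-tB_{12}$ (for $t=5$), $-tB_{13}$ (for $t=6$) or vanish; the corresponding symmetrized terms $J^t_{p,q}+J^t_{q,p}$ equal $\pm tB_{22},\pm tB_{23},\pm tB_{32},\pm tB_{33}$; and every pair $(p,q)$ with $p$ or $q\in\{5,6\}$ contributes an identically zero summand. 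Hence $A$ and $1/t^2$ never enter at all: at any single frame the \emph{entire} content of \eqref{quadrcond} is the pair of equations $B_{12}B_{22}+B_{13}B_{23}=0$ and $B_{12}B_{32}+B_{13}B_{33}=0$, i.e.\ the vector $(B_{12},B_{13})$ is orthogonal to both $(B_{22},B_{23})$ and $(B_{32},B_{33})$. So your worry about disentangling $A$ from $B$ is moot, and when $B=0$ every summand vanishes, which settles the forward implication.

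This same structure, however, breaks the mechanism you propose for the reverse implication. Once you use \eqref{abctrans} to put $B$ in SVD form $\mathrm{diag}(\beta_1,\beta_2,\beta_3)$, every product in the two equations above contains an off-diagonal entry of $B$, so at the normalized frame \eqref{quadrcond} collapses to $0=0$: it is \emph{vacuous}, and there is no ``coefficient that no choice of $A$ can cancel'' to extract. For the same reason, your assertion that a scalar $A$ with $B\neq 0$ ``genuinely violates'' \eqref{quadrcond} is false frame-by-frame: a nonzero diagonal $B$, with any $A$, satisfies both equations. The hypothesis has force only because it holds on \emph{all} of $O(M)_-$, and the information must be harvested at frames where $B$ acquires nonzero off-diagonal entries; this is exactly what the paper's proof does (it keeps $B$ general, applies explicit changes of frame to generate new equations, and reaches the contradiction $B_{22}^2=-B_{23}^2$), and it is the step missing from your plan. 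Your normalization can be salvaged, but only by a second use of the frame freedom after diagonalizing: for instance, taking $a_+=I_3$ and $a_-$ a rotation by $\pi/4$ in the $(1,2)$-plane gives $\tilde{B}_{12}\tilde{B}_{22}+\tilde{B}_{13}\tilde{B}_{23}=\tfrac{1}{2}\beta_2^2$, forcing $\beta_2=0$; the analogous rotation in the $(1,3)$-plane kills $\beta_3$; and a rotation of both factors moving the last singular value into the $(2,2)$-slot, followed by the first rotation again, kills $\beta_1$. Without some such additional rotations the argument, as written, does not close.
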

\begin{proof}
First, suppose that $M$ is Einstein. Then, a direct computation over
the components of $\nabla J$ shows that \eqref{quadrcond} holds;
indeed, for instance, by Theorem \ref{cond4mani}
\[
\sum_{t=1}^6(J_{1,3}^t+J_{3,1}^t)(J_{1,1}^t-J_{3,3}^t)=B_{32}B_{12}+B_{13}B_{33}=0.
\]
Conversely, suppose that \eqref{quadrcond} holds. By choosing
$p=1,q=3$ and $p=1,q=4$, we obtain the following system
\[
\begin{cases}
	B_{12}B_{22}+B_{13}B_{23}=0;\\
	B_{12}B_{32}+B_{13}B_{33}=0.
\end{cases}
\]
By hypothesis, the two equations must hold on all $O(M)_-$. Let us
choose $e\in O(M)_-$ and suppose that $B_{ij}=0$ for some $i,j$; we
want to prove that $B=0$. For instance, let $B_{11}=0$ (the
other cases can be shown analogously). Let us choose a
smooth change of frames $a\in SO(4)$ such that
\[
a_+=a_-=
\pmat{
	0 & 1 & 0\\
	-1 & 0 & 0\\
	0 & 0 & 1};
\]
by \eqref{abctrans}, we obtain that the matrix $\tilde{B}$ associated
to the frame $\tilde{e}=ea$ has the form
\[
\tilde{B}=
\pmat{
	\tilde{B_{11}} & \tilde{B_{12}} & \tilde{B_{13}}\\
	\tilde{B_{21}} & \tilde{B_{22}} & \tilde{B_{23}}\\
	\tilde{B_{31}} & \tilde{B_{32}} & \tilde{B_{33}}
}=
\pmat{
	B_{22} & - B_{21} & -B_{23}\\
	-B_{12} & 0 & B_{13}\\
	-B_{32} & B_{31} & B_{33}.
}
\]
By hypothesis, we have that
\[
0=\tilde{B}_{12}\tilde{B}_{22}+\tilde{B}_{13}\tilde{B}_{23}=-B_{23}B_{13},
\]
that is, $B_{23}=0$ or $B_{13}=0$. In both cases, with similar computations,
it can be shown that all the other $B_{ij}$ are zero, i.e. $B=0$, which means
that $M$ is Einstein by \eqref{abctrans}. In particular, if one of the
$B_{ij}$ in the system is $0$, then $M$ is Einstein.

Let us now suppose $B_{12},B_{13},B_{22},B_{23},B_{32},B_{33}\neq 0$.
By the previous system of equations, we obtain that
\[
B_{12}=-\dfrac{B_{13}B_{23}}{B_{22}}=-\dfrac{B_{13}B_{33}}{B_{32}},
\]
which implies that
\[
B_{23}B_{32}=B_{22}B_{33}.
\]
Choosing the matrices
\[
a_+=I_3, \qquad
a_-=
\begin{pmatrix}
	0 & 0 & 1\\
	0 & 1 & 0\\
	-1 & 0 & 0
\end{pmatrix}
\]
in \eqref{abctrans}, we deduce
\[
B_{13}B_{22}=B_{12}B_{23}=-\dfrac{B_{13}B_{23}}{B_{22}}B_{23},
\]
that is,
\[
B_{22}=-\dfrac{B_{23}^2}{B_{22}}.
\]
But this implies that $B_{22}^2=-B_{23}^2$, i.e. $B_{22}=B_{23}=0$,
which is a contradiction. This means that at least one of
the terms in the system above has to be equal to $0$; therefore,
by the previous considerations, $M$ is Einstein.
\end{proof}
We highlight that the fact that Theorem \ref{einstquadr} is only true
\emph{locally}: this means that, if $M$ is Einstein, the condition
\eqref{quadrcond} holds only for orthonormal frames. Indeed,
if $X,Y\in\mathfrak{X}(Z)$, the global version of the equation
\eqref{quadrcond}, namely
\begin{equation*}
  \pair{\pa{\nabla_XJ_+}\pa{Y}+\pa{\nabla_YJ_+}\pa{X}, \pa{\nabla_XJ_+}\pa{X}-\pa{\nabla_YJ_+}\pa{Y}}=0,
  \end{equation*}
is not satisfied, in general, if the norm of $X$ or $Y$ is different
from $1$ (for instance, it is sufficient to consider $Y=2X$). However,
it is important to underline that, in order to find characterizations
of the Einstein manifolds \textit{via} polynomial conditions on $\nabla J_+$,
we have to investigate equations of order higher than $1$, since, by
Theorem \ref{linselfdual}, every linear condition on $\nabla J_+$ implies
that $M$ is self-dual.

Now we show the following (see Theorem \ref{TH:LocalQuadratic2} in the introduction):
\begin{theorem}\label{einstquadr2}
	Let $(M,g)$ be an oriented Riemannian Einstein four-manifold. Then, for every orthonormal frame in
	$O(M)_-$,
	\begin{equation} \label{nijeinst}
		\pa{J^t_{q, p}+J^t_{p, q}}N_{pq}^t=0 \qquad \text{(no sum over $p$,$q$,$t$)}.
	\end{equation}
	Conversely, if 
	the equation \eqref{nijeinst} holds on $O(M)_-$, then, for every point
	$p\in M$ such that $\abs{\weyl^-}\neq 0$ at $p$, $B=0$. In particular, if 
	$\abs{\weyl^-}\neq 0$ on $M$, $(M,g)$ is an Einstein manifold.
\end{theorem}
\begin{proof}
	If $(M,g)$ is Einstein, we obtain that
	\begin{align} \label{nearnij}
		(J_{3,1}^5+J_{1,3}^5)N_{13}^5&=-(J_{4,2}^5+J_{2,4}^5)N_{24}^5=t^2B_{32}(A_{22}-A_{33});\\
		(J_{4,1}^5+J_{1,4}^5)N_{14}^5&=-(J_{3,2}^5+J_{2,3}^5)N_{23}^5=-2t^2B_{22}A_{23}; \notag \\
		(J_{3,1}^6+J_{1,3}^6)N_{13}^6&=-(J_{4,2}^6+J_{2,4}^6)N_{24}^6=2t^2B_{33}A_{23}; \notag \\
		(J_{4,1}^6+J_{1,4}^6)N_{14}^6&=-(J_{3,2}^6+J_{2,3}^6)N_{23}^6=t^2B_{23}(A_{22}-A_{33}); \notag
	\end{align}
	since $B=0$ by hypothesis, \eqref{nijeinst} holds. Conversely, 
	let $p\in M$ such that $\abs{\weyl^-}\neq 0$ at $p$: this 
	implies that $A$ is not a scalar matrix.
	Since the matrix $A$ is symmetric, there
	exists $e\in O(M)_-$ such that $A$ is diagonal, i.e.
	\[
	A=
	\pmat{
		x & 0 & 0\\
		0 & y & 0\\
		0 & 0 & z
	};
	\]
	since $A$ is not scalar, we can assume that $y\neq z$. Let
	\[
	a_+=
	\pmat{
		1 & 0 & 0\\
		0 & \frac{1}{\sqrt{2}} & -\frac{1}{\sqrt{2}}\\
		0 & \frac{1}{\sqrt{2}} & \frac{1}{\sqrt{2}}
	}\in SO(3);
	\]
	by \eqref{abctrans}, we have that, with respect to the
	transformed frame $\tilde{e}\in O(M)_-$,
	\[
	A=
	\pmat{
		x & 0 & 0\\
		0 & \frac{1}{2}(y+z) & \frac{1}{2}(z-y)\\
		0 & \frac{1}{2}(z-y) & \frac{1}{2}(y+z)
	}.
	\]
	By hypothesis, \eqref{nijeinst} holds on $O(M)_-$, which implies
	that $B_{22}=B_{33}=0$ with respect to $\tilde{e}$, by
	\eqref{nearnij} and the fact that $y\neq z$. Putting
	\[
	a_+=
	\pmat{
		1 & 0 & 0\\
		0 & 0 & -1\\
		0 & 1 & 0
	},
	\]
	by the transformation laws \eqref{abctrans}, we obtain
	\[
	\tilde{A}=
	\pmat{
		x & 0 & 0\\
		0 & \frac{1}{2}(y+z) & \frac{1}{2}(y-z)\\
		0 & \frac{1}{2}(y-z) & \frac{1}{2}(y+z)
	}, \qquad
	\tilde{B}=
	\pmat{
		B_{11} & B_{13} & -B_{12}\\
		B_{21} & B_{23} & 0\\
		B_{31} & 0 &-B_{32}
	};
	\]
	by \eqref{nijeinst}, $B_{23}=B_{32}=0$ (note that
	$\tilde{A}_{23}\neq 0$). By \eqref{abctrans}, if
	$a\in SO(4)$ is a change of frames such that $\mu(a)=(a_+,a_-)$,
	$A$ is invariant under the action of $a_-$: therefore, by similar
	computations on $B$ with $a_-=I_3$, it is easy to show that $B_{12}=B_{13}=0$.
	Now, let us consider two cases:
	\begin{enumerate}
		\item {\bf Case } $\mathbf{x\neq\frac{1}{2}(y+z)}$. Putting
		\[
		a_+=
		\pmat{
			0 & 0 & 1\\
			0 & 1 & 0\\
			-1 & 0 & 0
		},
		\]
		we have
		\[
		\tilde{A}=
		\pmat{
			\frac{1}{2}(y+z) & \frac{1}{2}(y-z) & 0\\
			\frac{1}{2}(y-z) & \frac{1}{2}(y+z) & 0\\
			0 & 0 & x
		}, \qquad
		\tilde{B}=
		\pmat{
			0 & 0 & B_{11}\\
			0 & 0 & B_{21}\\
			0 & 0 & B_{31}
		};
		\]
		since $\tilde{A}_{22}\neq\tilde{A}_{33}$, by \eqref{nijeinst}
		we obtain $B_{21}=0$. Again, since $A$ is invariant under the action
		of $a_-$, by analogous computations $B_{11}=0$. Finally,
		putting
		\[
		a_+=
		\pmat{
			0 & -1 & 0\\
			1 & 0 & 0\\
			0 & 0 & 1
		},
		\]
		we conclude that $B_{31}=0$, that is, $M$ is Einstein.
		\item {\bf Case } $\mathbf{x=\frac{1}{2}(y+z)}$. In this case,
		$A$ and $B$ have the form
		\[
		A=
		\pmat{
			x & 0 & 0\\
			0 & x & x-z\\
			0 & x-z & x
		}, \qquad
		B=
		\pmat{
			B_{11} & 0 & 0\\
			B_{21} & 0 & 0\\
			B_{31} & 0 & 0
		}
		\]
		(note that $x\neq z$, otherwise $\weyl^-=0$ at $p$). Choosing
		\[
		a_+=
		\pmat{
			\frac{1}{\sqrt{2}} & -\frac{1}{\sqrt{2}} & 0\\
			\frac{1}{\sqrt{2}} & \frac{1}{\sqrt{2}} & 0\\
			0 & 0 & 1
		},
		\]
		we obtain
		\[
		\tilde{A}=
		\pmat{
			x & 0 & \frac{1}{\sqrt{2}}(x-z)\\
			0 & x & \frac{1}{\sqrt{2}}(x-z)\\
			\frac{1}{\sqrt{2}}(x-z) & \frac{1}{\sqrt{2}}(x-z) & x
		}, \qquad
		\tilde{B}=
		\pmat{
			\frac{1}{\sqrt{2}}B_{11} & -\frac{1}{\sqrt{2}}B_{11} & 0\\
			\frac{1}{\sqrt{2}}B_{21} & -\frac{1}{\sqrt{2}}B_{21} & 0\\
			\frac{1}{\sqrt{2}}B_{31} & -\frac{1}{\sqrt{2}}B_{31} & 0
		};
		\]
		by $x\neq z$ and \eqref{nijeinst}, $B_{21}=0$. As we did earlier,
		the invariance of $A$ under the action of $a_-$ implies that
		$B_{11}=0$. Finally, choosing
		\[
		a_+=
		\pmat{
			\frac{1}{\sqrt{2}} & 0 & -\frac{1}{\sqrt{2}}\\
			0 & 1 & 0\\
			\frac{1}{\sqrt{2}} & 0 & \frac{1}{\sqrt{2}}
		},
		\] \\
		we conclude that $B_{31}=0$, i.e. $B=0$ at $p$.
		
	\end{enumerate}
\end{proof}
\begin{rem}
	As Theorems \ref{TH:LocalQuadratic} and \ref{TH:LocalQuadratic2} show,
	it is possible to obtain local, twistorial characterization
	of Einstein metrics on four-manifolds \emph{via} 
	quadratic polynomial conditions
	on the Atiyah-Hitchin-Singer almost complex structure: however, it
	seems much more difficult to find \emph{global} conditions for
	Einstein four-manifolds in terms of their twistor spaces, without
	assuming self-duality as a hypothesis. To the best of our
	knowledge, the only result in the literature which addresses
	the problem of global twistorial conditions for Einstein manifolds
	is due to Reznikov (\cite{reznik}), who showed that, under
	the further assumption that the sectional curvature is
	nowhere vanishing, the twistor space of an Einstein manifold
	is symplectic, with respect to a 2-form derived from 
	a computation involving linear connections which preserve
	the complex structure. We observe that, as it is 
	well-known, imposing self-duality
	on the underlying manifold implies much more rigid global conditions:
	for instance, it was observed by Hitchin that any Ricci-flat, self-dual
	manifold has a globally trivial twistor bundle (\cite{hitchinpoly}),
	which, in the compact case, tells that flat manifolds and
	certain quotients of K3 surfaces are the only four-manifolds
	with trivial twistor bundle, by a result in \cite{hitchincomp}
	(we point out that the authors obtained a global 
	twistorial characterization
	of Ricci-flat, self-dual manifolds in \cite{cdmtwist} in terms
	of the Eells-Salamon almost complex structure). Also, 
	the aforementioned results by Friedrich-Grunewald, Friedrich-Kurke
	and Hitchin show that global conditions on twistor spaces
	can be obtained in the Einstein, self-dual case 
	(\cite{frigru, frikur, hit}), together with
	the classification result due to Mu\v{s}karov \cite{mus}
	(see also the references cited in the Introduction); 
	another example could be the necessary and sufficient condition obtained by 
	Davidov, Grantcharov and Mu\v{s}karov, who showed that,
	for every Einstein, self-dual manifold, the Nijenhuis tensor
	of the Eells-Salamon almost complex structure is covariantly
	constant with respect to the Chern connection (\cite{davchern}). 
	We point out that (anti-)self-duality itself may lead to 
	strong global conclusions in terms of twistor spaces: for instance,
	LeBrun (\cite{lebrunexplicit}) 
	showed that, on the connected sum $M=m\ol{\mathbb{CP}^2}$,
	where $\ol{\mathbb{CP}^2}$ is the complex projective space
	with the opposite of the standard orientation and $m\geq 3$,
	there exists an anti-self-dual metric such that the 
	twistor space of $M$ is \emph{Moishezon}, i.e. 
	bimeromorphic to a projective variety, while Campana 
	managed to show a converse of this result (\cite{campana}). 
\end{rem}
\section{Data availability statements}
Data sharing not applicable to this article as no datasets were generated or analysed during the current study.

\section{Acknowledgements}
The authors would like to express their gratitude
to the anonymous referees for the useful suggestions and improvements. 
The authors are members of the Gruppo Nazionale per le
Strutture Algebriche, Geometriche e loro Applicazioni (GNSAGA) of INdAM
(Istituto Nazionale di Alta Matematica).

\bibliographystyle{abbrv}
\bibliography{Bibliotwist}

\end{document}